\numberwithin{equation}{section}
\newtheorem{theorem}{Theorem}
\newtheorem{lemma}{Lemma}[section]
\newtheorem{remark}{Remark}[section]
\numberwithin{equation}{section}
\renewcommand{\a}{\alpha}
\renewcommand{\b}{\beta}
\def\R{{\mathbb{R}}}
\def\N{{\mathbb{N}}}
\def\Z{{\mathbb{Z}}}
\def\T{{\mathbb{T}}}
\def\C{{\mathbb{C}}}
\begin{document}
\title{$5/6$-Superdiffusion of energy for coupled charged harmonic oscillators in a magnetic field}

\author[K.~Saito]{Keiji Saito}
\address{Department of Physics, Faculty of Science and Technology, Keio University, 3-14-1, Hiyoshi, Kohoku-ku, Yokohama, Kanagawa, 223-8522, Japan}
\email{saitoh@rk.phys.keio.ac.jp}

\author[M.~Sasada]{Makiko Sasada}
\address{Graduate School of Mathematical Sciences, University of Tokyo, 3-8-1, Komaba, Meguro-ku, Tokyo, 153--8914, Japan}
\email{sasada@ms.u-tokyo.ac.jp}

\author{Hayate Suda}
\address{Graduate School of Mathematical Sciences, University of Tokyo, 3-8-1, Komaba, Meguro-ku, Tokyo, 153--8914, Japan}
\email{hayates@ms.u-tokyo.ac.jp}

\begin{abstract}
We consider a one-dimensional infinite chain of coupled charged harmonic oscillators in a magnetic field with a small stochastic perturbation of order $\epsilon$. We prove that for a space-time scale of order $\epsilon^{-1}$ the density of energy distribution (Wigner distribution) evolves according to a linear phonon Boltzmann equation. We also prove that an appropriately scaled limit of solutions of the linear phonon Boltzmann equation is a solution of the fractional diffusion equation with exponent $5/6$. 
\end{abstract}

\maketitle

\section{Introduction}

There has been much progress during the past decades in the understanding of superdiffusion in one dimensional systems with several conservation laws. Chains of coupled oscillators are typical models showing superdiffusive transport of energy. They are the one-dimensional Hamiltonian systems
\begin{align*} 
\begin{cases}
\frac{d}{dt} q_x(t) & = \partial_{v_x}\mathcal{H}(v_x(t),q_x(t)) \\
\frac{d}{dt} v_x(t) & = -\partial_{q_x}\mathcal{H}(v_x(t),q_x(t)) , 
\end{cases}
\end{align*}
with Hamiltonian 
\begin{align*}
&\mathcal{H} =  \sum_{x \in \Z} \left( \frac{|v_x|^{2}}{2}  + V(q_x- q_{x+1})  \right). 
\end{align*}
Here $v_x(t)$ is the velocity of the oscillator $x$ at time $t$ and $q_x(t)$ is the displacement from its equilibrium position of the oscillator $x$ at time $t$.
In the case where the potential $V$ is quadratic, the dynamics is linear and the chain is said to be harmonic and otherwise anharmonic. The Fermi-Pasta-Ulam chain (FPU chain) has possibly cubic and/or quartic terms in the potential. Super diffusion of energy and the divergence of the corresponding thermal conductivity have been observed numerically in the dynamics of FPU chains (\cite{D},\cite{Ls},\cite{LLP}). Strong efforts are made to identify the exponent of the divergence and the nature of superdiffusion in FPU chains numerically and theoretically in recent years.

In an innovative article \cite{S}, Spohn discussed an asymptotic behavior of time-dependent correlation functions of heat mode applying the method of fluctuating hydrodynamics. His argument suggests that for general anharmonic chains the macroscopic diffusion of energy is governed by the fractional diffusion equation
\begin{align}\label{fractional}
\partial_{t} \mathbf{e}(y,t) = - (- \Delta_{y})^{\frac{s}{2}}\mathbf{e}(y,t).
\end{align}
Moreover, Spohn's theory suggests that there are only two universality classes, $s = \frac{3}{2}$ or $\frac{5}{3}$.

However, a rigorous mathematical analysis of the energy transport in the anharmonic chains is too hard to justify Spohn's theory. Recently as an analytically tractable model, the \textit{harmonic} chains of oscillators with a stochastic exchange of momentum between neighboring sites, which we call the momentum exchange model, was introduced \cite{BBO}. In \cite{BBO} the authors prove the divergence of the thermal conductivity for this model and obtain an explicit exponent of the divergence of Green-Kubo formula. To understand the nature of superdiffusion for this model, a weak noise limit is studied in \cite{BOS}. They show that in the weak noise limit the time evolution of the local density of the energy is governed by the Boltzmann equation 
\begin{align}\label{BBOBoltzmann}
\partial_{t} u(y,k,t) + \frac{1}{2 \pi} \omega'(k) \partial_{y} u(y,k,t) = (\mathcal{L}u)(y,k,t) , \\
(\mathcal{L}u)(y,k,t) = \int_{\T} dk' ~ R(k,k') (u(y,k',t) - u(y,k,t)). \notag 
\end{align}
Here, the local density of energy $u(y,k,t)$ depends on the position $y \in \R$ along the chain, the wave number $k \in \T = [-\frac{1}{2},\frac{1}{2})$ and time $t \ge 0$. ${\omega}(k)$ is the dispersion relation. Later in \cite{KJO}, it is shown that a properly scaled solution of the Boltzmann equation \eqref{BBOBoltzmann} converges to the solution of the fractional diffusion equation \eqref{fractional} with $s=\frac{3}{2}$. The main idea of the proof of this convergence is the following: Since the scattering kernel $R(k,k')$ is positive, \eqref{BBOBoltzmann} can be interpreted as the forward equation for the probability density of a Markov process $(z(t),k(t))$ on $\R \times \T$. Applying a limit theorem for additive functionals of Markov processes, the scaled process $N^{ - \frac{2}{3}} z(Nt)$ converges to a L\'{e}vy process generated by $- (-\Delta)^{\frac{3}{4}}$ (up to a constant). By this two-step scaling limit, the $3/4$-fractional diffusion equation is derived from the momentum exchange model rigorously. Recently the $3/4$-fractional diffusion equation is derived by a direct limit (namely one-step scaling limit) in \cite{JKO}. For a variant of the momentum exchange model, a skew $3/4$-fractional diffusion equation is derived by a direct space-time scaling limit in \cite{BGJ}.

Most recently in \cite{SS,TSS} two of the authors introduced another variant of the momentum exchange model which also shows the superdiffusive behavior of the energy but the exponent of the divergence of Green-Kubo formula is different from the original one. The model is a chain of coupled charged harmonic oscillators in a magnetic field with a stochastic exchange of velocity between neighboring sites. 

The goal of the present paper is to understand the nature of the superdiffusion for this coupled charged harmonic chain of oscillators in a magnetic field with noise. We apply the two-step scaling limits. 
Following the idea of \cite{BOS}, we first show as Theorem \ref{thm:main} that in the weak noise limit the local density of energy is governed by the phonon linear Boltzmann equation 
\begin{align}\label{ourBoltzmann}
\partial_{t} u(y,k,i,t) & + \frac{1}{2\pi} \omega'(k) \partial_{y} u(y,k,i,t) = \mathcal{L} u(y,k,i,t) , \\
\mathcal{L}u(y,k,i,t) & = \sum_{j= 1,2} \int_{\mathbf{T}} dk' ~ R(k,i,k',j) (u(y,k',j,t)-u(y,k,i,t) ). \notag 
\end{align}
Here, the local density of energy $u(y,k,i,t)$ depends on position $y \in \R$ along the chain, the wave number $k \in \T$, the type of phonon $i =1,2$ and time $t \ge 0$. Then, we consider a properly scaled solution of the Boltzmann equation \eqref{ourBoltzmann} and show that it converges to the solution of the fractional diffusion equation \eqref{fractional} with $s=\frac{5}{3}$ as Theorem \ref{thm:main2}. This provides a first rigorous example of the $5/6$-superdiffusion of energy in a chain of oscillators.

A key ingredient of the proof of Theorem \ref{thm:main2} is the scaling limit of an additive functional of a Markov process as the prior work. Actually, since the scattering kernel $R(k,i,k',j)$ is positive, $(\ref{ourBoltzmann})$ can be interpreted as the time evolution of the density for a Markov process $(Z(t),K(t),I(t))$ on $\R \times \T \times \{ 1,2 \}$. Applying a general limit theorem in \cite{KJO}, we show that the scaled process $N^{ - \frac{3}{5}} Z(Nt)$ converges to a L\'{e}vy process generated by $- (-\Delta)^{\frac{5}{6}}$ (up to a constant) as Theorem \ref{thm:main3}. 

The difference of the exponents between $\frac{3}{4}$ (obtained in \cite{KJO,JKO} for the original momentum exchange model) and $\frac{5}{6}$ is explained by the asymptotic behavior of the derivative of the dispersion relation $\omega'(k)$ and the mean value of the scattering kernel $R(k) = \int_{\T} R(k,k')dk'$ as $k \to 0$. (We abbreviate the term $i,j$.) Roughly speaking, if 
\begin{align*}
\omega'(k) \sim k^{a}, ~ R(k) \sim k^{b}  ~ \textit{as}  ~ k \to 0
\end{align*}
for some $a,b \in \N_{\ge 0}$, by applying the argument in \cite{KJO} formally, one will obtain a L\'{e}vy process generated by $- (- \Delta)^{\frac{b+1}{2(b-a)}}$ as a proper scaling limit if $0 < \frac{b+1}{2(b-a)} < 1$ and by $\Delta$ if $\frac{b+1}{2(b-a)} \ge 1$. For the original momentum exchange model presented in \cite{BOS} and \cite{JKO} 
\begin{align*}
\omega'(k) \sim 1, ~ R(k) \sim k^{2} ~ \textit{as}  ~  k \to 0,
\end{align*}
while in our model
\begin{align*}
\omega'(k) \sim k, ~ R(k) \sim k^{4} ~ \textit{as}~  k \to 0.
\end{align*}
In particular, our model has the vanishing sound speed since $\lim_{k \to 0}\omega'(k)=0$. To be more precise, in our model $R(k,i) = \sum_{j=1}^2\int_{\T} R(k,i,k',j)dk'$ satisfies $R(k,1) \sim k^{2}$ and $R(k,2) \sim k^{4}$ (or $R(k,2) \sim k^{2}$ and $R(k,1) \sim k^{4}$ depending on the sign of the magnetic field) and the latter dominates the macroscopic evolution. Note that for a class of non-acoustic chains introduced in \cite{KO}, 
\begin{align*}
\omega'(k) \sim k, ~ R(k) \sim k^{2} ~ \textit{as} ~  k \to 0
\end{align*}
and so its macroscopic evolution is diffusive.


A technically crucial idea of our proof of Theorem \ref{thm:main} is that we consider the microscopic local density of energy, called the Wigner distribution in physics, associated to the eigenvectors of the deterministic dynamics including the effect of the magnetic field. If we employ the classical wave functions which are the eigenvectors of the harmonic Hamiltonian dynamics (without a magnetic field) and study its associated Wigner distribution, then we obtain a \textit{system} of Boltzmann equations as the weak noise limit. However, so far we do not know how to rescale the solutions of the system and derive the fractional diffusion equation from it. By employing the modified wave functions, instead of the classical wave functions, we obtain a \textit{single} limiting Boltzmann equation which is much easier to analyze. This strategy can be applied to derive the limiting equation from other Hamiltonian systems with some energy-conservative external field.  

Our paper is organized as follows: In Section 2 we prepare some notations. In Section 3 we introduce our model, wave functions and its associated Wigner distribution. Note that since we consider the infinite system, we need to define our model in terms of wave functions to make the argument rigorous. In Section 4 we state our main results, Theorem \ref{thm:main} and \ref{thm:main2}. We study a Markov process associated to our Boltzmann equation and its scaling limit in Section 5. Proofs of Theorem \ref{thm:main} and \ref{thm:main2} are given in Sections 6 and 7 respectively. 

\section{Notations}

Let $\T \cong [-\frac{1}{2},\frac{1}{2})$ be the one-dimensional torus. For $f \in \ell^{2}(\Z)$, we introduce the discrete Laplacian $\Delta f : \Z \to \R$ defined by
\begin{align*}
\Delta f (x) &= f(x+1) + f(x-1) - 2 f(x)
\end{align*}
and its Fourier transform $\widehat{f} \in L^{2}(\T) $ defined by
\begin{align*}
\widehat{f}(k) &= \sum_{x \in \Z} e^{-2 \pi \sqrt{-1} k x} f(x).
\end{align*}

For functions $f,g \in \ell^{2}(\Z)$, the discrete convolution $f * g :\Z \to \R$ is defined by
\begin{align*}
 f *g(x) &= \sum_{z \in \Z} f(x-z)g(z).
\end{align*}

For $J : \R \times \T \to \mathbb{C}$ such that $J(y,k)$ is rapidly decreasing in $y \in \R$, we define $\widehat{J} : \R \times \T \to \mathbb{C}$ as
\begin{align*}
\widehat{J}(p,k) = \int_{\R} dy ~ e^{-2 \pi \sqrt{-1} p y} J(y,k).
\end{align*}
Let $\mathbf{S}$ be the space of rapidly decreasing functions on $ \R \times \T$ defined by
\begin{align*}
\mathbf{S} =\{ J \in  C^{\infty}( \R \times \T ,\mathbb{C}) \ ; \  |J|_{m,n}  < \infty  \quad \forall m, n \in \Z_{\ge 0} \}
\end{align*}
where
\begin{align*}
|J|_{m,n} = \sup_{r,s \le m} \sup_{y \in \R , k \in \T} (1 + y^{2})^{n}|\partial_{y}^{r} \partial_{k}^{s} J(y, k)|.
\end{align*}

We introduce a norm $||\cdot||$ on $\mathbf{S}^{2} = \mathbf{S} \times \mathbf{S}$ defined by
\begin{align*}
||\boldsymbol{J}|| = \sum_{i = 1 , 2} \int_{\R} dp \sup_{k} |\widehat{J_{i}}(p,k)| 
\end{align*}
for $\boldsymbol{J}=(J_{1},J_{2}) \in \mathbf{S}^{2}$ and define a topology on $\mathbf{S}^{2}$ induce by the norm $||\cdot||$.

By $(\mathbf{S}^2)'$ we denote the dual space of $\mathbf{S}^2$ equipped with the weak-$*$ topology.

For two functions $f(k)$ and $g(k)$ defined on $\T$ or $\T \setminus \{0\}$, we denote by $f(k) \sim g(k)$ as $k \to 0$ if there exists a constant $C>0$ such that for all $k$ whose absolute value is small enough, $\frac{1}{C}|g(k)| \le |f(k)| \le C|g(k)|$.

\section{The Dynamics}

We consider the one-dimensional infinite chain of coupled charged harmonic oscillators in two-dimensional space with weak continuous noise. Since the dynamics involves infinite number of particles, we give a formal description of the deterministic dynamics in Section 3.1, a formal construction of the associated wave functions in Section 3.2 and a formal description of the stochastic perturbation in Section 3.3. They are rigorous when we consider a finite chain. Then we present a proper definition of the dynamics in Section 3.4. In Section 3.5 we introduce the Wigner distribution associated to our wave functions.

\subsection{Deterministic dynamics.}

We consider a one-dimensional chain of oscillators in a magnetic field. Our deterministic dynamics $ ( \mathbf{v}_x(t), \mathbf{q}_x(t) ) \in \R^{2} \times \R^{2}$ is formally given as follows:
\begin{align}\label{eq:dynamics} 
\begin{cases}
\frac{d}{dt} q_x^i & =v_x^i \\
\frac{d}{dt} v_x^i & =[\Delta q^i]_x +\delta_{i,1}Bv^2_x-\delta_{i,2}B v^1_x 
\end{cases}
\end{align}
for $x \in \Z, i =1,2$ where $B \in \R \setminus \{0\}$ is the strength of the magnetic field. 

The total energy $E$ of the system is formally given by
\begin{align*}
E = \sum_{i=1,2} \sum_{x \in \Z } \left( \frac{|v_x^i|^{2}}{2} + \frac{|q_x^i - q_{x+1}^i|^{2}}{2} \right).
\end{align*}

We introduce operators $A$ and $G$ as follows:
\begin{align*}
A  & = \sum_{i=1,2} \sum_{x \in \Z}( v_{x}^i  \partial_{q_{x}^i} + [\Delta q^{i}]_x \partial_{v_{x}^i}) , \\
G  & = \sum_{x \in \Z} \big( v_{x}^2  \partial_{v_{x}^1} - v_{x}^1  \partial_{v_{x}^2} \big). 
\end{align*}
Then our deterministic dynamics formally satisfies $\frac{d}{dt}f( \mathbf{v}, \mathbf{q})=(A+BG)f( \mathbf{v}, \mathbf{q})$ for any smooth cylinder function $f$, that is, $f$ depends on the configuration $( \mathbf{v}, \mathbf{q})$ only through a finite set of coordinates.

Let $\a: \Z \to \R$ be a function that $\a(0) = 2 $, $\a(1)= \a(-1) = -1$ and $\a(x) = 0, |x| \ge 2$. Using this function, the total energy $E$ and the operator $A$ are also written as follows:
\begin{align*}
E  &= \sum_{i=1,2} \left(\sum_{x \in \Z } \frac{|v_{x}^i|^{2}}{2} + \sum_{ x , x' \in \Z } \frac{\a(x-x')}{2} q_{x}^iq_{x'}^i \right), \\
A  & = \sum_{i=1,2} \left(\sum_{x \in \Z} v_{x}^i  \partial_{q_{x}^i} - \sum_{x , x' \in \Z} \a(x-x')q_{x'}^i \partial_{v_{x}^i}\right).
\end{align*}

\begin{remark}\label{generala}
Suppose that $\a_* : \Z \to \R$ is a function satisfying the following conditions $(a.1) - (a.4)$.

$(a.1) ~ \a_* (x) \neq 0 $ for some  $x \in \Z. $

$(a.2) ~ \a_* (x) = \a_* (-x) $ for all $ x \in \Z.$

$(a.3) ~ $ There exist some positive constants $C_{1} , C_{2}$ such that $|\a_* (x)| \le C_{1}e^{-C_{2}|x|} $ for all $x \in \Z$.

$(a.4) ~ \widehat{\a_* }(k) >0 $ for all $k \neq 0$ , $\widehat{\a_* }(0) = 0 , \widehat{\a_* }''(0) > 0$.

We can consider the dynamics associated to $\a_*$, or precisely that given by $A_* + BG$ where
\begin{align*}
A_* = \sum_{i=1,2}  \left( \sum_{x \in \Z} v_{x}^i  \partial_{q_{x}^i} - \sum_{x , x' \in \Z} \a_* (x-x')q_{x'}^i\partial_{v_{x}^i}\right).
\end{align*}
Then, Theorem 1, 2, and 3 are generalized to this dynamics (with stochastic perturbation) by replacing $\a$ with $\a_* $. The generalization from $\a$ to $\a_* $ is straightforward, so we omit the proof.
\end{remark}

\subsection{Wave functions}

To define our dynamics rigorously and then introduce the Wigner distribution, we consider the Fourier transform of the configuration $( \mathbf{v}, \mathbf{q})$. 
From the formal description of the dynamics \eqref{eq:dynamics}, the time evolution of the deterministic process $( \widehat{\mathbf{v}}(k,t), \widehat{\mathbf{q}}(k,t) )$ should be given by
\begin{align}\label{eq:dynamicsinF}
\partial_{t} ~ \begin{pmatrix} \widehat{q^1}(k,t) \\ \widehat{q^2}(k,t) \\ \widehat{v^1}(k,t)  \\ \widehat{v^2}(k,t) \end{pmatrix} = M(k) ~ \begin{pmatrix} \widehat{q^1}(k,t) \\ \widehat{q^2}(k,t) \\ \widehat{v^1}(k,t)  \\ \widehat{v^2}(k,t) \end{pmatrix} , \\
M(k) = \begin{pmatrix} 0 & 0 & 1 & 0 \\ 0 & 0 & 0 & 1 \\ -\widehat{\a}(k) & 0 & 0 & B \\ 0 & -\widehat{\a}(k) & - B & 0 \end{pmatrix} \notag,
\end{align}
for each $k \in \T$ where $\widehat{\a}(k) = 2 - 2\cos{2 \pi k}$. Note that the dynamics \eqref{eq:dynamicsinF} is well-defined for any initial condition $( \widehat{\mathbf{v}}(k,0), \widehat{\mathbf{q}}(k,0) )$ for each $k \in \T$.

We denote the eigenvalues of the matrix $M(k)$ by $\{ \pm \sqrt{-1} {\omega}_i(k) , i = 1,2 \}$, which are explicitly given as
\begin{align*}
{\omega}_1(k)  &= \sqrt{\widehat{\a}(k) + \frac{B^{2}}{4}} + \frac{B}{2} , \\
{\omega}_2(k)  &= \sqrt{\widehat{\a}(k) + \frac{B^{2}}{4}} - \frac{B}{2} .
\end{align*}
Note that ${\omega}_i(k), \omega'_i(k), ~ i=1,2$ are bounded in $k \in \T$ and $\omega'_1 = \omega'_2$. Denote by $\omega'(k)$ the common value of $\omega'_i(k)$. We introduce the corresponding wave functions $\{ \widehat{\psi_{i}}(k,t) ; i = 1,2 \}$ given by
\begin{align}\label{defofpsi1}
\widehat{\psi_{1}}(k,t) &= \theta_{1}(k)(\widehat{v^1}(k,t) - \sqrt{-1}{\omega}_2(k)\widehat{q^1}(k,t) + \sqrt{-1}\widehat{v^2}(k,t) + {\omega}_2(k)\widehat{q^2}(k,t)) ,\notag \\
\widehat{\psi_{2}}(k,t) &= \theta_{2}(k)(\widehat{v^1}(k,t) - \sqrt{-1}{\omega}_1(k)\widehat{q^1}(k,t) - \sqrt{-1}\widehat{v^2}(k,t) - {\omega}_1(k)\widehat{q^2}(k,t))
\end{align}
with
\begin{align*}
\theta_{i}(k) &= \sqrt{\frac{{\omega}_i(k)}{{\omega}_1(k)+{\omega}_2(k)}} , ~ i=1,2. 
\end{align*}
$\widehat{\psi_{i}}(k)$ is the eigenfunction associated to the eigenvalue $- \sqrt{-1} {\omega}_i(k)$ :
\begin{align*}
\partial_t \widehat{\psi_{i}}(k) = - \sqrt{-1} {\omega}_i(k) \widehat{\psi_{i}}(k) , ~ i=1,2.
\end{align*}
We normalize $\widehat{\psi}$ by multiplying $\theta_{i}$ so that the total energy $E$ is given by the integral of the $L^2$ norm of the wave functions as
\begin{align*}
E  & = \frac{1}{2} \int_{\T} dk ~ \left(|\widehat{v^{1}}(k)|^{2}+|\widehat{v^{2}}(k)|^{2}+\widehat{\a}(k)(|\widehat{q^{1}}(k)|^{2}+|\widehat{q^{2}}(k)|^{2}) \right) \\
&= \frac{1}{2}\int_{\T} dk ~ \left(|\widehat{\psi_{1}}(k)|^{2} + |\widehat{\psi_{2}}(k)|^{2}\right).
\end{align*}
By a direct computation we have
\begin{align}\label{repbypsi}
\widehat{v_{1}}(k) &= \frac{\theta_{1}(k)}{2}(\widehat{\psi_{1}}(k) + \widehat{\psi_{1}}(-k)^{*}) + \frac{\theta_{2}(k)}{2}(\widehat{\psi_{2}}(k) + \widehat{\psi_{2}}(-k)^{*}) , \notag \\
\widehat{v_{2}}(k) &= - \frac{\sqrt{-1} \theta_{1}(k)}{2}(\widehat{\psi_{1}}(k) - \widehat{\psi_{1}}(-k)^{*}) + \frac{\sqrt{-1} \theta_{2}(k)}{2}(\widehat{\psi_{2}}(k) - \widehat{\psi_{2}}(-k)^{*}) , \notag \\
\widehat{q_{1}}(k) &= \frac{\sqrt{-1} \theta_{1}(k)}{2{\omega}_1(k)}(\widehat{\psi_{1}}(k) - \widehat{\psi_{1}}(-k)^{*}) + \frac{\sqrt{-1} \theta_{2}(k)}{2{\omega}_2(k)}(\widehat{\psi_{2}}(k) - \widehat{\psi_{2}}(-k)^{*}) , \notag \\
\widehat{q_{2}}(k) &= \frac{\theta_{1}(k)}{2{\omega}_1(k)}(\widehat{\psi_{1}}(k) + \widehat{\psi_{1}}(-k)^{*}) - \frac{\theta_{2}(k)}{2{\omega}_2(k)}(\widehat{\psi_{2}}(k) + \widehat{\psi_{2}}(-k)^{*}). 
\end{align}


\subsection{Stochastic perturbation.}

We consider a local stochastic perturbation of the dynamics \eqref{eq:dynamics} which conserves the total energy. We introduce an operator $S$ as follows:
\begin{align*}
S  &= \frac{1}{2} \sum_{x \in \Z} (Y_{x,x+1})^{2} = \frac{1}{4} \sum_{x \in \Z} \sum_{z \in \Z;|x-z| = 1} (Y_{x,z})^{2}, \\
Y_{x , z}  &= (v^2_z - v_x^2) (\partial_{v^1_z}-\partial_{v_{x}^1}) -(v^{1}_z - v_{x}^1)(\partial_{v^2_z}-\partial_{v_{x}^2}). 
\end{align*}
We consider a Markov process $( \mathbf{v}_x(t) , \mathbf{q}_x(t) )$ generated by $L := A + BG +\epsilon \gamma S$. $\gamma > 0$ is the strength of the stochastic noise and $0< \epsilon <1$ is a scale parameter. The dynamics can be also given by the stochastic differential equation
\begin{align}\label{formalz}
\begin{cases}
d q_x^i & =v_x^i dt \\
d v_x^i & = ( - [\a * q^{i}]_x +\delta_{i,1}Bv^{2}_x-\delta_{i,2}B v^{1}_x + \epsilon \gamma [\Delta v^{i}]_x ) dt \\
& ~ + \sqrt{\epsilon \gamma} \sum_{z; |z-x| = 1} (Y_{x , z} v_x^i) dw_{x,z} ,
\end{cases}
\end{align}
for $x \in \Z$, $i=1,2$ where $\{ w_{x,z}(t) = w_{z,x}(t) ; x,z \in \Z, |z-x| = 1 \}$ are independent standard Wiener processes on $\R$. Note that $L$ formally conserves the total energy and the total pseudomomentum $\sum_{x} v_{x}^1 - Bq^2_x, \sum_{x} v_{x}^2 + Bq^1_x$. For more details about the conserved quantities, see \cite{SS}.

\begin{remark}
This specific choice of noise is not important. Our proof is also applicable for the
velocity exchange noise used in \cite{SS} and yields the same scaling limits. For the construction of this jump-type process, we can follow the  argument in Chapter 5 of \cite{FFL}.
\end{remark}

\subsection{Rigorous definition of the dynamics}

In this subsection, we define the dynamics rigorously. First, we calculate the time evolution of the wave functions $\widehat{\psi_{i}}(k,t)$ obtained from the formal description \eqref{formalz}: 
\begin{align}\label{formalk}
d \widehat{q^i}(k,t) & = \widehat{v^i}(k,t) dt ~ , i=1,2 \notag ,\\
d \widehat{v^1}(k,t) &= (- \widehat{\a}(k) \widehat{q^1}(k,t) + B\widehat{v^2}(k,t) + \epsilon \gamma \widehat{\b}(k) \widehat{v^1}(k,t) ) dt \notag \\
& ~ - \sqrt{\epsilon \gamma} \int_{\T} r(k,k') \widehat{v^{2}} (k-k',t) W(dk',dt) \notag ,\\
d \widehat{v^2}(k,t) &= (- \widehat{\a}(k) \widehat{q^2}(k,t) - B\widehat{v^1}(k,t) + \epsilon \gamma \widehat{\b}(k) \widehat{v^2}(k,t) ) dt \notag \\
& ~ + \sqrt{\epsilon \gamma} \int_{\T} r(k,k') \widehat{v^{1}} (k-k',t) W(dk',dt) ,
\end{align}
where 
\begin{align*}
\widehat{\b}(k) &= 2 \cos{2 \pi k} - 2 ,\\
r(k,k') &= (e^{-2 \pi \sqrt{-1} k'} - e^{-2 \pi \sqrt{-1} k})(e^{2 \pi \sqrt{-1} k} - 1) ,\\
W(k,t) &= \sum_{x \in \Z} w_{x,x+1}(t) e^{-2 \pi \sqrt{-1} k x}.
\end{align*}
The term with $\widehat{\b}(k)$ comes from the stochastic perturbation. In our case $\widehat{\a}(k) = - \widehat{\b}(k)$, but in general (cf. Remark \ref{generala}) there is no such relation between $\widehat{\a}$ and $\widehat{\b}$, and so we keep $\widehat{\a}$ and $\widehat{\b}$ for the generalization. $W$ is called a cylindrical Wiener process on $\mathbb{L}^{2}(\T)$. A precise derivation of \eqref{formalk} from \eqref{formalz} is given in Appendix \ref{derivation}. Combining $(\ref{defofpsi1})$ and $(\ref{formalk})$ we have
\begin{align}\label{defofpsi}
& d \widehat{\psi_{1}}(k,t) = (- \sqrt{-1}{\omega}_1(k) \widehat{\psi_{1}}(k,t) + \epsilon \gamma \theta_{1}(k) \b(k) ( \theta_{1}(k) \widehat{\psi_{1}}(k,t) + \theta_{2}(k) \widehat{\psi_{2}}(-k,t)^{*}) )dt \notag \\
& ~ + \sqrt{-1} \theta_{1}(k) \sqrt{\epsilon \gamma} \int_{\T} r(k,k') (\theta_{1}(k-k') \widehat{\psi_{1}}(k-k',t) + \theta_{2}(k-k') \widehat{\psi_{2}}(k'-k,t)^{*} ) W(dk',dt) , \notag \\
& d \widehat{\psi_{2}}(k,t) = (- \sqrt{-1}{\omega}_2(k) \widehat{\psi_{2}}(k,t) + \epsilon \gamma \theta_{2}(k) \b(k) ( \theta_{1}(k) \widehat{\psi_{1}}(-k,t)^{*} + \theta_{2}(k) \widehat{\psi_{2}}(k,t)) ) dt \notag \\
& ~ - \sqrt{-1} \theta_{2}(k) \sqrt{\epsilon \gamma} \int_{\T} r(k,k') (\theta_{1}(k-k') \widehat{\psi_{1}}(k'-k,t)^{*} + \theta_{2}(k-k') \widehat{\psi_{2}}(k-k',t) ) W(dk',dt) . 
\end{align}


Now we define a stochastic process $\{ \widehat{\boldsymbol{\psi}}(\cdot,t) \in (\mathbb{L}^{2}(\T))^{2} ; t \ge 0 \} $ as the unique solution of \eqref{defofpsi}. We can show the existence of the solution by using a classical technique, called a fixed-point theorem. For the sketch of the proof, see Appendix \ref{existence}. Once we define the dynamics $ \widehat{\boldsymbol{\psi}}(\cdot,t) \in (\mathbb{L}^{2}(\T))^{2} $, then we can also define $\widehat{\mathbf{v}}(k,t)$ by \eqref{repbypsi} and then define a stochastic process $\{ \mathbf{v}_x(t), \boldsymbol{\psi}(x,t) ; x \in \Z , t \ge 0  \}$ by 
\begin{align*}
&v_x^i(t) = \int_{\T} dk ~ e^{2\pi \sqrt{-1} k x} \widehat{v_{i}}(k,t) , \\
&\psi_i(x,t) = \int_{\T} dk ~ e^{2\pi \sqrt{-1} k x} \widehat{\psi_{i}}(k,t)
\end{align*}
for $x \in \Z, i= 1,2$. On the other hand, $\widehat{\mathbf{q}}(\cdot,t)$ is not necessarily well-defined as an element of $(\mathbb{L}^{2}(\T))^2$ because ${\omega}_2(k) \sim k^{2}$ as $k \to 0$ if $B>0$ and ${\omega}_1(k) \sim k^{2}$ as $k \to 0$ if $B<0$. Hence, $\mathbf{q}_x(t)$ are also not necessarily well-defined. Hereafter we do not use the variables $\mathbf{q}_x$.

\subsection{Wigner distribution.}

Let $Q_{\epsilon}$ be a probability measure on $(\mathbb{L}^{2}(\T))^{2} $ which satisfies the following condition:
\begin{align}\label{cond.initial}
K_{0}  = \sup_{0 < \epsilon < 1} \sum_{i=1,2} \epsilon \int_{\T} dk ~ E_{Q_{\epsilon}}[ |\widehat{\psi_{1}}(k)|^{2} + |\widehat{\psi_{2}}(k)|^{2}]  ~ < ~ \infty. 
\end{align}
Denote by $\mathbb{E}_{\epsilon}$ the expectation with respect to the distribution of $\{\widehat{\psi_{i}}(\cdot,t)\}_{t \ge 0}$ which starts from $Q_{\epsilon}$. In Appendix \ref{conservation}, we show that 
\[
\sum_{i=1,2} \mathbb{E}_{\epsilon}[ ||\widehat{\psi_{i}}(\cdot,t)||_{\mathbb{L}_{2}}^{2} ] = \sum_{i=1,2} \mathbb{E}_{\epsilon} [ ||\widehat{\psi_{i}}(\cdot,0)||_{\mathbb{L}_{2}}^{2} ]
\]
for any $t \ge 0$. In particular, under the condition $(\ref{cond.initial})$ 
\begin{align}\label{ebound}
\sup_{0 < \epsilon < 1} \sum_{i=1,2} \epsilon \int_{\T} dk ~ \mathbb{E}_{\epsilon}[ |\widehat{\psi_{1}}(k,t)|^{2} + |\widehat{\psi_{2}}(k,t)|^{2} ] ~ = K_{0} ~ < ~ \infty
\end{align}
for any time $t \ge 0$.

For the wave function $\boldsymbol{\psi}$, we introduce the averaged Wigner function as in Section 3 of \cite{BOS}. We denote the Wigner distribution on the time scale $\epsilon^{-1}t$ by $\Omega^{\epsilon}(t)$ with $\epsilon$ the small semiclassical parameter. Namely, we define $\Omega^{\epsilon}(t) \in (\mathbf{S}^2)'$ by
\begin{align*}
&<\Omega^{\epsilon}(t),\boldsymbol{J}> ~ = \sum_{i=1,2} <\Omega_{i}^{\epsilon}(t),J_{i}> 
\end{align*}
for $\boldsymbol{J}=(J_{1},J_{2}) \in \mathbf{S}^{2}$ with
\begin{align}\label{defofWigner}
&<\Omega_{i}^{\epsilon}(t),J>  \notag \\
&= \frac{\epsilon}{2} \sum_{x,x' \in \Z} \mathbb{E}_{\epsilon}[\psi_{i}(x',\frac{t}{\epsilon})^{*} \psi_{i}(x,\frac{t}{\epsilon}) ] \int_{\T} dk ~ e^{2\pi \sqrt{-1} (x'-x) k} J(\frac{\epsilon}{2} (x+x'),k)^{*} \notag \\
&= \frac{\epsilon}{2} \int_{\R} dp \int_{\T} dk ~ \mathbb{E}_{\epsilon} [\widehat{\psi_{i}}(k-\frac{\epsilon p}{2},\frac{t}{\epsilon})^{*} ~  \widehat{\psi_{i}}(k+\frac{\epsilon p}{2},\frac{t}{\epsilon}) ] \widehat{J}(p,k)^{*} 
\end{align}
for $J \in \mathbf{S}$. 
By the Cauchy-Schwarz inequality and $(\ref{ebound})$, 
\begin{align}\label{lbb}
\sup_{0 < \epsilon < 1} \sup_{t \ge 0} |<\Omega^{\epsilon}(t),\boldsymbol{J}>| ~ \le ~ \frac{1}{2} K_{0}||\boldsymbol{J}||
\end{align}
under the condition $(\ref{cond.initial})$. 
\begin{remark}
As discussed in \cite{BOS}, $\Omega^{\epsilon}(\cdot)$ is well-defined on a wider class of test functions than $\mathbf{S}^2$. In particular we can take $\boldsymbol{J}(y,k) =(J(k),J(k))$ with a bounded function $J(k)$ on $\T$, and then we have
\begin{align*}
<\Omega^{\epsilon}(t),\boldsymbol{J}> = \frac{\epsilon}{2} \int_{\T} dk ~ \sum_{i=1,2}\mathbb{E}_{\epsilon} [|\widehat{\psi_{i}}(k,\frac{t}{\epsilon})|^{2} ] J(k).
\end{align*}
From this representation one can see that $\Omega^{\epsilon}(\cdot)$ is the distribution of the spectral density of the energy. Also if we take $\boldsymbol{J}(y,k) =(J(y),J(y))$ with a rapidly decreasing function $J(y)$ on $\R$ as a test function, then we have
\begin{align*}
<\Omega^{\epsilon}(t),\boldsymbol{J}> = \frac{\epsilon}{2} \sum_{x \in \Z} \sum_{i=1,2}\mathbb{E}_{\epsilon} [ |\psi_{i}(x,\frac{t}{\epsilon})|^{2} ] J(\epsilon x).
\end{align*}
This is the integral of $J$ with respect to the averaged empirical measure of $\frac{1}{2} \sum_{i=1,2}|\psi_{i}(x,\frac{t}{\epsilon})|^2$. Namely, $ \Omega^{\epsilon}(t)$ is a rescaled microscopic local spectral density. 
\end{remark}


\section{Main results}
As mentioned in the Introduction, the main purpose of the present paper is to understand the nature of the superdiffusion for the coupled charged harmonic chain of oscillators in a magnetic field with noise defined in the last section, and we apply the two-step scaling limits. In Subsection 4.1, following the idea of \cite{BOS}, we claim that in the weak noise limit the local density of energy is governed by a phonon linear Boltzmann equation. In Subsection 4.2, we consider a properly scaled solution of the Boltzmann equation and state that it converges to the solution of the fractional diffusion equation \eqref{fractional} with $s=\frac{5}{3}$, which is our main result.

\subsection{Boltzmann equation}

In this subsection we state the limiting behavior of the Wigner distribution. 
\begin{theorem}\label{thm:main}
Suppose the condition \eqref{cond.initial} holds. If $\Omega^{\epsilon}(0)$ converges to $\Omega_0$ in $(\mathbf{S}^2)'$ as $\epsilon \to 0$, then for all $t \ge 0$, $\Omega^{\epsilon}(t)$ converges to a vector-valued finite positive measure $\boldsymbol{\mu}(t)=(\mu_{1}(t),\mu_{2}(t))$ in $(\mathbf{S}^2)'$ as $\epsilon \to 0$, which is the unique solution of the following Boltzmann equation 
\begin{align}\label{mboltzmann}
\begin{cases}
\partial_{t} \int d \boldsymbol{\mu}(t) \cdot \boldsymbol{J} = \frac{1}{2\pi} \int d\boldsymbol{\mu}(t) \cdot \omega' \partial_{y} \boldsymbol{J} + \gamma \int d\boldsymbol{\mu}(t)\cdot C\boldsymbol{J} \\
\int d \boldsymbol{\mu}(0) \cdot \boldsymbol{J} = < \Omega_0  ,  \boldsymbol{J}>,
\end{cases}
\end{align}
where
\begin{align*}
\int d \boldsymbol{\mu} \cdot \boldsymbol{J} &= \sum_{i=1,2} \int_{\R \times \T} \mu_{i} (dy,dk) ~ J_{i}(y,k)^* \quad \text{for} \quad \boldsymbol{\mu}=(\mu_1,\mu_2),\\
(C\boldsymbol{J})_{i}(x,k) &= \sum_{j = 1,2} \int_{\T} dk' \theta_{i}(k)^{2} R(k,k') \theta_{j}(k')^{2} (J_{j}(x,k')-J_{i}(x,k))  
\end{align*}
for $\boldsymbol{J}=(J_1,J_2)\in \mathbf{S}^{2}$ with $R(k,k') = 16\sin^{2}{\pi k}\sin^{2}{\pi k'}$.

\end{theorem}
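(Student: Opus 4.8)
The plan is to derive a closed evolution equation for $\Omega^\epsilon(t)$ by applying Itô's formula to the bilinear quantities $\widehat{\psi_i}(k_-,t/\epsilon)^*\,\widehat{\psi_i}(k_+,t/\epsilon)$ (with $k_\pm = k \pm \tfrac{\epsilon p}{2}$) that appear in \eqref{defofWigner}, to read off the transport and collision operators from the deterministic and the quadratic-variation parts of the SDE \eqref{defofpsi} respectively, and then to show that every bilinear object other than the type-diagonal one vanishes in the limit because it carries a non-resonant oscillating phase. First I would compute $d\big(\widehat{\psi_i}(k_-,t/\epsilon)^*\widehat{\psi_i}(k_+,t/\epsilon)\big)$. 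The free part $-\sqrt{-1}\omega_i\widehat{\psi_i}$ contributes the factor $\sqrt{-1}(\omega_i(k_-)-\omega_i(k_+))$; since $\omega_i(k_+)-\omega_i(k_-) = \epsilon p\,\omega'(k) + O(\epsilon^2)$ and time is rescaled by $\epsilon^{-1}$, testing against $\widehat{J}(p,k)^*$ and identifying $p$ as the Fourier dual of $y$ produces the transport term $\tfrac{1}{2\pi}\int d\boldsymbol{\mu}(t)\cdot\omega'\partial_y\boldsymbol{J}$.

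The $O(\epsilon\gamma)$ deterministic $\widehat{\beta}$-terms and the Itô correction from the quadratic covariation of the two stochastic integrals are each of order $\epsilon\gamma$ per microscopic unit of time, hence $O(1)$ after the $\epsilon^{-1}$ time rescaling. Computing the covariation with the Itô isometry for the cylindrical Wiener process $W$ and performing the change of variables $k'\mapsto k-k'$ in the scattered-in integral yields exactly the gain and loss parts of the collision operator with kernel $\theta_i(k)^2 R(k,k')\theta_j(k')^2$, the identity $|r(k,k-k')|^2 = 16\sin^2\pi k\,\sin^2\pi k'$ producing $R$. However, the same covariation and the $\widehat{\beta}$-terms also generate the type-off-diagonal bilinears $\mathbb{E}[\widehat{\psi_1}(k_-)^*\widehat{\psi_2}(k_+)]$ and the ``anti-diagonal'' correlations $\mathbb{E}[\widehat{\psi_i}(k_-)\widehat{\psi_j}(k_+)]$ (products without a conjugate). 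I would collect all of these into an enlarged linear system and show they decouple in the limit. The point is that each such object evolves with a phase $e^{\sqrt{-1}(\pm\omega_i(k_-)\pm\omega_j(k_+))t/\epsilon}$ whose argument is bounded away from zero: off-diagonal gives $\omega_1-\omega_2\equiv B\neq 0$, and the anti-diagonal sums satisfy $\omega_1+\omega_2 = 2\sqrt{\widehat{\alpha}+B^2/4}\ge |B|$ and $2\omega_1\ge 2|B|$. Integrating in $t$ against smooth test functions and integrating by parts in time (equivalently, passing to the Laplace transform in time as in \cite{BOS}), the rapid oscillation forces these contributions to vanish as $\epsilon\to 0$, leaving a closed equation for the diagonal part.

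The hard part will be the single resonant branch $\mathbb{E}[\widehat{\psi_2}(k_-)\widehat{\psi_2}(k_+)]$, whose phase $\omega_2(k_-)+\omega_2(k_+)$ degenerates at $k=0$ because $\omega_2(k)\sim k^2$ there (the vanishing sound speed; when $B>0$). I would treat it by splitting the $k$-integral into a shrinking neighborhood $|k|\le \delta(\epsilon)$ and its complement: on the complement the time-oscillation argument applies with a uniform lower bound on the phase, while on the neighborhood I would use the a priori $L^1$-bound \eqref{ebound} together with the suppression coming from the weight $\theta_2(k)^2\sim\omega_2(k)\to 0$ to make the remaining contribution negligible after optimizing $\delta(\epsilon)$.

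Finally I would close the argument in three steps. By \eqref{lbb} the family $\{\Omega^\epsilon(t)\}$ lies in a fixed bounded subset of $(\mathbf{S}^2)'$, so Banach–Alaoglu gives weak-$*$ convergence along subsequences; an equicontinuity estimate in $t$ (again obtained from the evolution equation) ensures the limit points are continuous measure-valued trajectories, and positivity of the Wigner functional forces the limit to be a vector-valued positive measure $\boldsymbol{\mu}(t)$. The computation above shows every limit point solves \eqref{mboltzmann}. Uniqueness of the measure-valued solution of \eqref{mboltzmann} follows because its generator is a bounded collision perturbation (the loss rate is bounded since $\sin^2\pi k\le 1$) of the first-order transport operator $\tfrac{1}{2\pi}\omega'\partial_y$, so \eqref{mboltzmann} is governed by a unique strongly continuous semigroup via the Duhamel series; this uniqueness upgrades subsequential convergence to convergence of the full family $\Omega^\epsilon(t)\to\boldsymbol{\mu}(t)$ for every $t\ge 0$.
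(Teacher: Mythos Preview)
Your overall strategy—It\^o's formula on the bilinears, identification of the transport and collision parts, elimination of the non-resonant off-diagonal objects by time oscillation, then compactness plus uniqueness—matches the paper's proof in Section~\ref{sec:proof1} and Appendices~\ref{positivity}--\ref{uniqueness}. The one substantive difference is that you over-anticipate the off-diagonal bookkeeping and manufacture a ``hard part'' that does not exist.

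In the paper's actual derivation the \emph{only} off-diagonal bilinear that enters the evolution of $\widehat{\Omega_i^\epsilon}_+$ is the cross-type object
\[
\widehat{\Gamma_i^\epsilon}_+(t)(p,k)=\tfrac{\epsilon}{2}\,\mathbb{E}_\epsilon\big[\widehat{\psi_i}(-k+\tfrac{\epsilon p}{2})\,\widehat{\psi_{i^*}}(k+\tfrac{\epsilon p}{2})\big]
\]
(and its conjugate). Neither the conjugated cross term $\mathbb{E}[\widehat{\psi_1}^*\widehat{\psi_2}]$ nor the same-type anti-diagonal products $\mathbb{E}[\widehat{\psi_i}\,\widehat{\psi_i}]$ ever arise. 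The reason is structural: in the SDE \eqref{defofpsi} the field $\widehat{\psi_1}(k)$ is coupled (both in the drift and through the noise) only to $\widehat{\psi_1}(k')$ and to $\widehat{\psi_2}(-k')^*$, never to $\widehat{\psi_2}(k')$ or $\widehat{\psi_1}(-k')^*$. This is exactly the ``technically crucial idea'' of working with eigenvectors of the full magnetic dynamics that the Introduction advertises. As a consequence, the phase carried by $\Gamma^\epsilon$ is $\omega_1(k)+\omega_2(k)=2\sqrt{\widehat{\alpha}(k)+B^2/4}\ge |B|$, uniformly bounded below on all of $\T$, and the oscillation lemma (Lemma~6.1) goes through directly: one multiplies by $(\omega_1+\omega_2)^{-1}\boldsymbol{J}\in\mathbf{S}^2$ and reads off the vanishing from the $\epsilon^{-1}$ prefactor. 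No splitting near $k=0$, no appeal to the vanishing weight $\theta_2(k)^2$, and no optimization over $\delta(\epsilon)$ are needed; your third paragraph can be dropped entirely. Apart from this simplification your outline is the paper's proof.
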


\begin{remark}
In the case $B = 0$, if we assume an additional assumption
\begin{align*}
\lim_{\rho \to 0} \limsup_{\epsilon \to 0} \frac{\epsilon}{2} \sum_{i=1}^2\int_{|k| < \rho} dk ~ E_{Q_{\epsilon}}[|\widehat{\psi_{i}}(k)|^{2}] = 0 
\end{align*}
on the initial measure $Q_{\epsilon}$, the same statement of Theorem \ref{thm:main} holds. For this case, the proof is essentially given in \cite{BOS}.

\end{remark}

\begin{remark}
Suppose that the solution of (\ref{mboltzmann}) has the density $u(y,k,i,t)$ for all $t \in [0,T]$ , that is, 
\begin{align*}
\mu_{i}(t)(dy,dk)  &= u(y,k,i,t) dy dk ~ , ~ i = 1,2 , \\
\mu_{i}(0)(dy,dk)  &= u_{0}(y,k,i) dy dk ~ , ~ i = 1,2 .
\end{align*}
Then $u(y,k,i,t)$ is a weak solution of the linear Boltzmann equation
\begin{align}\label{boltzmann}
\begin{cases}
\partial_{t} u(y,k,i,t) + \frac{1}{2\pi} \omega'(k) \partial_{y} u(y,k,i,t) = \gamma \mathcal{L} u(y,k,i,t) \\
u(y,k,i,0) = u_{0}(y,k,i) , \\
\end{cases}
\end{align}
where
\begin{align*}
\mathcal{L}u(y,k,i,t) = \sum_{j= 1,2} \int_{\mathbf{T}} dk' \theta_{i}(k)^2 R(k,k') \theta_{j}(k')^2 (u(y,k',j,t)-u(y,k,i,t) ) .
\end{align*}
\end{remark}

We prove Theorem \ref{thm:main} in Section \ref{sec:proof1}. The strategy of our proof is as follows:
First we derive a microscopic evolution equation of $\Omega^{\epsilon}$, which is not closed in terms of $\Omega^{\epsilon}$. Then, with this expression of the time evolution, we show that for any fixed $T>0$, $\{ \Omega^{\epsilon}(t) , 0 \le t \le T \}_{0 < \epsilon < 1}$ is sequentially compact in $C([0,T];(\mathbf{S}^2)')$ in a certain weak-$*$ sense. See its precise meaning in Section \ref{sec:proof1}. We verify that any limit of a convergent subsequence is extended to a vector-valued finite positive measure in Appendix \ref{positivity}. The uniqueness of the bounded solution of $(\ref{mboltzmann})$ in the class of vector-valued finite positive measures is shown in Appendix \ref{uniqueness}. Finally we show that any limit of a convergent subsequence satisfies $(\ref{mboltzmann})$, which is a closed equation in terms of $\boldsymbol{\mu}$. Summarizing the above we can show that $( \Omega^{\epsilon}(\cdot) )_{\epsilon}$ is convergent and the limit satisfies $(\ref{mboltzmann})$.

\subsection{Derivation of the $\frac{5}{6}$ fractional diffusion equation.}

In this subsection we study a macroscopic behavior of a solution of properly scaled Boltzmann equation \eqref{boltzmann}.
Consider a spatially scaled linear Boltzmann equation with a scaling parameter $N$ as
\begin{align}\label{boltzmannN}
\begin{cases}
\partial_{t} u(y,k,i,t) + \frac{1}{N^{3/5}}\frac{1}{2\pi} \omega'(k) \partial_{y} u(y,k,i,t) = \gamma \mathcal{L} u(y,k,i,t) \\
u(y,k,i,0) = u_{0}(y,k,i) , \\
\end{cases}
\end{align}
and denote its solution by $u_N$.

\begin{remark}
For any given $u_{0}(y,k,i) \in C_{0}^{\infty}(\R \times \T) , ~ i=1,2$, a solution of \eqref{boltzmann} is constructed explicitly using a Markov process associated to the Boltzmann equation in the next section. The uniqueness of solutions in a certain class follows from that of (\ref{mboltzmann}). The argument also applies to \eqref{boltzmannN} and so the existence and uniqueness of $u_N$ follows.
\end{remark}

\begin{theorem}\label{thm:main2}
Suppose $u_{0}(y,k,i) \in C_{0}^{\infty}(\R \times \T) , ~ i=1,2$. Define the initial local density of energy at $y \in \R$ as $\bar{u}_0(y) =\sum_{i=1,2} \int_{\T \times \{ 1,2 \} } dk ~ u_{0}(y,k,i)$. Then, for all $y \in \R, ~ t \ge 0$, 
\begin{align*}
\lim_{N \to \infty} \sum_{i=1,2} \int_{\T} dk ~ |u_{N}(y,k,i,Nt) - \frac{1}{2}\bar{u}(y,t)|^2 = 0 ,
\end{align*}
where $\bar{u}$ is a solution of 
\begin{align}\label{boltzmann2}
\begin{cases}
\partial_{t} \bar{u}(y,t) = - D (-\Delta_{y})^{\frac{5}{6}} \bar{u}(y,t) \\
\bar{u}(y,0) = \bar{u}_0(y) 
\end{cases}
\end{align}
and $D = D(B,\gamma,\a) $ is a positive constant such that
\begin{align*}
D = C |B|^{-\frac{1}{3}} \gamma^{-\frac{2}{3}} \widehat{\a}''(0) 
\end{align*}
with a universal constant $C$.
In particular,
\begin{align*}
\lim_{N \to \infty} ~ | \sum_{i=1,2} \int_{\T} dk \  u_{N}(y,k,i,Nt) - \bar{u}(y,t)|^2 = 0.
\end{align*}

\end{theorem}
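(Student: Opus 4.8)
The plan is to prove Theorem \ref{thm:main2} via the probabilistic interpretation announced in the introduction, combined with the general limit theorem of \cite{KJO} (Theorem \ref{thm:main3} in the paper). The starting observation is that since the scattering kernel $R(k,k')=16\sin^{2}\pi k\sin^{2}\pi k'$ is nonnegative, the operator $\gamma\mathcal{L}$ is the generator of a Markov jump process $(K(t),I(t))$ on $\T\times\{1,2\}$, and the full operator on the right-hand side of \eqref{boltzmann} generates the process $(Z(t),K(t),I(t))$ on $\R\times\T\times\{1,2\}$, where $Z(t)$ is the additive functional $Z(t)=\frac{1}{2\pi}\int_0^t \omega'(K(s))\,ds$ (the free transport term). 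Concretely, the solution $u_N(y,k,i,t)$ of the spatially scaled equation \eqref{boltzmannN} admits the representation $u_N(y,k,i,t)=\mathbb{E}_{(k,i)}\big[u_0\big(y-N^{-3/5}Z(t),K(t),I(t)\big)\big]$, so understanding the rescaled limit amounts to proving the convergence of the scaled additive functional $N^{-3/5}Z(Nt)$ to the stable process generated by $-D(-\Delta)^{5/6}$.

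The key steps, in order, are as follows. First I would make rigorous the Markov-process representation of $u_N$ just described, so that the spatial rescaling $y\mapsto y$, $t\mapsto Nt$ in Theorem \ref{thm:main2} is converted into the process rescaling $N^{-3/5}Z(Nt)$. Second, I would invoke Theorem \ref{thm:main3} (the scaling-limit result quoted from \cite{KJO}) to conclude that $N^{-3/5}Z(Nt)$ converges to the $\frac{5}{3}$-stable L\'{e}vy process with generator $-D(-\Delta)^{5/6}$; here the exponent and the explicit constant $D=C|B|^{-1/3}\gamma^{-2/3}\widehat{\a}''(0)$ come from the asymptotics $\omega'(k)\sim k$ and $R(k,i)\sim k^{2}$ (for the dominant phonon type) as $k\to 0$, as computed via the $(a,b)=(1,2)$ instance of the heuristic $(b+1)/(2(b-a))=3/2$ that gives the fractional exponent $5/6$. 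Third, I would verify the ergodicity of the jump process $(K(t),I(t))$ on $\T\times\{1,2\}$ with respect to the uniform measure: since $R$ is symmetric and strictly positive away from $k,k'=0$, the velocity/type marginal equilibrates and the conditional law of $(K(Nt),I(Nt))$ converges to uniform, independently of $Z(Nt)$. This decoupling is what produces the factor $\tfrac12$ and the $k$-independence of the limit: $u_N(y,k,i,Nt)\to \frac12\bar{u}(y,t)$ pointwise and then in $L^2(\T\times\{1,2\})$. Fourth, I would identify $\bar{u}(y,t)=\mathbb{E}_y[\bar{u}_0(y-L_t)]$, where $L_t$ is the limiting stable process, as the unique solution of the fractional diffusion equation \eqref{boltzmann2} with initial datum $\bar{u}_0$, and then sum over $k,i$ to obtain the final displayed assertion.

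The main obstacle, I expect, will be Step 2 together with the precise local analysis feeding into it: verifying that the hypotheses of the general limit theorem of \cite{KJO} are genuinely satisfied by our two-species kinetics, and pinning down the constant $D$ with the stated dependence on $B$, $\gamma$ and $\widehat{\a}''(0)$. The subtlety is that our model is genuinely two-component, with $R(k,1)\sim k^{2}$ and $R(k,2)\sim k^{4}$ (or vice versa, depending on $\operatorname{sgn}B$), so one phonon branch relaxes much faster near $k=0$ than the other; one must show that the slowly-relaxing branch controls the tail of the jump-rate distribution and hence the stable index, while the fast branch only contributes to the overall normalization. Quantifying this requires a careful small-$k$ expansion of the holding-time distribution of $(K(t),I(t))$ near the degenerate point $k=0$ (where $\omega'(k)\to 0$), showing that the tail of the displacement increments is regularly varying with index $5/3$, and tracking how the magnetic-field strength $|B|$ enters through $\omega_i(k)\sim k^{2}/|B|$. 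Once the regular-variation tail estimate and the effective diffusivity integral are established, the identification of the limit as $-D(-\Delta)^{5/6}$ and the explicit form of $D$ follow from the general theorem, but this local spectral analysis is where the real work lies.
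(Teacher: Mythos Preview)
Your outline is essentially the paper's proof: probabilistic representation of $u_N$, invoke Theorem~\ref{thm:main3} for the additive functional, use ergodicity of $(K(t),I(t))$ toward the uniform measure $\tfrac12\,dk\,di$ to get the factor $\tfrac12$, and identify the limit as the semigroup of the $\tfrac53$-stable process acting on $\bar u_0$. Two points deserve correction.

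First, the heuristic you give for the exponent is internally inconsistent. With $(a,b)=(1,2)$ one gets $(b+1)/(2(b-a))=3/2\ge 1$, which would be diffusive, not $5/6$. The branch that governs the tail is the \emph{slowly} relaxing one, for which the total jump rate is $\gamma\,\theta_i(k)^2 R(k)\sim k^4$ (this is exactly what you say correctly in your last paragraph), so the relevant pair is $(a,b)=(1,4)$ and $(b+1)/(2(b-a))=5/6$. In any case, for Theorem~\ref{thm:main2} this is moot: Theorem~\ref{thm:main3} is a separate statement with its own proof, and you should simply cite it rather than re-derive the tail estimate.

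Second, and more importantly, you misplace the main obstacle. The regular-variation and constant computation belong to Theorem~\ref{thm:main3}; the genuine work in Theorem~\ref{thm:main2} is the ``decoupling'' you mention only in passing. The variables $N^{-3/5}Z(Nt)$ and $(K(Nt),I(Nt))$ are \emph{not} independent for finite $N$, and ergodicity of $(K,I)$ alone does not yield joint convergence to a product law. The paper's device is to introduce an intermediate scale $m_N$ with $m_N\to\infty$ and $m_N N^{-3/5}\to 0$: one replaces $Z_N(Nt)$ by $Z_N(Nt-m_Nt)$ at cost $O(\|\omega'\|_\infty\,m_N N^{-3/5})$, then uses the Markov property at time $Nt-m_Nt$ so that, conditionally, $(K(Nt),I(Nt))$ is distributed as $P^{m_N t}(\cdot\,|\,K(Nt-m_Nt),I(Nt-m_Nt))$, which converges to $\tfrac12\,dk\,di$ in $L^2$ since $m_N\to\infty$. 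Only after this split can Theorem~\ref{thm:main3} be applied to $Z_N(Nt-m_Nt)$ and the two pieces combined by dominated convergence. Your Step~3 needs this mechanism spelled out; without it the asserted asymptotic independence is unjustified.
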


\begin{remark}
In the case $B = 0$, if we denote by $u_N(y,k,i,t)$ the solution of a scaled linear Boltzmann equation
\begin{align*}
\begin{cases}
\partial_{t} u(y,k,i,t) + \frac{1}{N^{2/3}} \frac{1}{2\pi} \omega'(k) \partial_{y} u(y,k,i,t) = \gamma \mathcal{L} u(y,k,i,t) \\
u(y,k,i,0) = u_{0}(y,k,i) , 
\end{cases}
\end{align*}
then for all $y \in \R,~ t \ge 0$, 
\begin{align*}
\lim_{N \to \infty} \sum_{i=1,2} \int_{\T} dk ~ |u_{N}(y,k,i,Nt) - \frac{1}{2}\bar{u}(y,t)|^2 = 0 
\end{align*}
where $\bar{u}$ is the solution of 
\begin{align*}
\begin{cases}
\partial_{t} \bar{u}(y,t) = - D' (-\Delta_{y})^{\frac{3}{4}} \bar{u}(y,t) \\
\bar{u}(y,0) = \bar{u}_0(y) .
\end{cases}
\end{align*}
and $D' = D'(\gamma,\a) $ is a positive constant such that
\begin{align*}
D' = C'\gamma^{-\frac{1}{2}} (\widehat{\a}''(0))^{\frac{3}{4}}
\end{align*}
with a universal constant $C'$.
The result is essentially proved in \cite{KJO}.
\end{remark}

For the proof, we follow the strategy of \cite{KJO}. Namely, we consider a long-time asymptotic behavior of a Markov process associated to the Boltzmann equation \eqref{boltzmann} and then use a functional limit theorem for a continuous time random walk. To apply a general theorem in \cite{KJO}, we need to check several conditions. This is the main subject of the next section, where we conclude all the required conditions are satisfied and then Theorem \ref{thm:main3} on the asymptotic behavior of a Markov process is obtained. We apply it to the study of the limit of $u_N$ and prove Theorem \ref{thm:main2} in Section \ref{sec:proof2}.

\section{Markov process associated to the Boltzmann equation}\label{markov}

In this section we construct a solution of $(\ref{boltzmann})$ probabilistically. We will see that there exists a Markov process associated to $(\ref{boltzmann})$ and study its long-time asymptotic behavior. 

Let $\{ ( K_{n} , I_{n} ) ; n \in \Z_{\ge 0} \}$ be a Markov chain on $\T \times \{ 1 , 2 \}$ whose transition probability is given by 
\begin{align*}
P(k,i,dk',j) = t(k,i) \gamma \theta_{i}(k)^2 R(k,k') \theta_{j}(k')^2 dk',
\end{align*}
where 
\begin{align*}
t(k,i)  &= [ \gamma \theta_{i}(k)^2 R(k) ]^{-1} , \quad R(k) = \int_{\T} dk' R(k,k').
\end{align*}
Since $R(k,k')$ is a product of functions of $k$ and $k'$, we have 
\begin{align*}
P(k,i,dk',j) = \pi(dk',j)
\end{align*}
where $\pi(dk,di)$ is a reversible measure for this Markov chain given as
\begin{align*}
\pi(dk,di) = \sum_{j=1,2} \frac{t(k,j)^{-1}}{\gamma \overline{R}} dk \delta_{ \{ j \} }(di) ,  \quad \overline{R} = \int_{\T} dk ~ R(k).
\end{align*}
In particular, $\{ ( K_{n} , I_{n} ) ; n \ge 1 \}$ is an i.i.d. sequence of random variables on $\T \times \{ 1 , 2 \}$ with distribution $\pi$.

Now we construct a continuous time random walk generated by $\mathcal{L}$. Let $\{ \tau_{n} , n \ge 1 \}$ be an i.i.d. sequence of random variables such that $\tau_{1}$ is exponentially distributed with intensity 1 and  $\{ ( K_{n} , I_{n} ) ; n \in \Z_{\ge 0} \}$ and $\{ \tau_{n} , n \ge 1 \}$ are independent. Set $t_{n} := \sum_{m = 1}^{n} t(K_{m-1},I_{m-1}) \tau_{m} , n \ge 1 , ~ t_{0} = 0 $ and define a stochastic process $( K(t) , I(t) ) $ as $K(t) = K_{n} , I(t) = I_{n} $ if $t \in [t_{n},t_{n+1})$. Then, by the construction $\{ ( K(t) , I(t) ) \}_{t \ge 0}$ is a continuous time random walk generated by $\mathcal{L}$. With this process we can construct an explicit solution of the equation (\ref{boltzmann}) by
\begin{align*}
u(y,k,i,t)  &= \mathbb{E}_{(k,i)}[u_{0}(Z(t),K(t),I(t))] ,
\intertext{where}
Z(t)  &= y + \frac{1}{2\pi} \int_{0}^{t} ds ~ \omega'(K(s)) ,
\end{align*}
and $K(0) = k , I(0) = i$. For this process, we have the following result.
\begin{theorem}\label{thm:main3}
Suppose $(K(0), I(0))=(k,i)$ for some $k \neq 0$ and $i=1$ or $2$. Then as $N \to \infty$, the finite-dimensional distribution of scaled processes \nolinebreak{$\{ N^{-\frac{3}{5}} Z(Nt)\}_{t \ge 0 }$} converge weakly to a  L\'{e}vy  process generated by $- D (-\Delta_{y})^{\frac{5}{6}}$, where $D = D(B,\gamma,\a) $ is a positive constant such that
\begin{align*}
D = C |B|^{-\frac{1}{3}} \gamma^{-\frac{2}{3}} \widehat{\a}''(0),
\end{align*}
and C is a positive constant which does not depend on B, $\gamma$, $\a$. 
\end{theorem}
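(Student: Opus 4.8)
The plan is to exploit the exceptionally simple probabilistic structure of the embedded chain and reduce the statement to a classical stable limit theorem for i.i.d.\ sums, read off at a random (but law-of-large-numbers concentrated) clock. Since $K(s)\equiv K_{n}$ is frozen on each interval $[t_{n},t_{n+1})$, evaluating $Z$ along the jump times gives a genuine random walk,
\begin{align*}
Z(t_{n}) = y + \sum_{m=0}^{n-1}\xi_{m}, \qquad \xi_{m} := \frac{1}{2\pi}\,\omega'(K_{m})\,t(K_{m},I_{m})\,\tau_{m+1}.
\end{align*}
As recorded after the definition of $\pi$, the pairs $\{(K_{m},I_{m})\}_{m\ge 1}$ are i.i.d.\ with law $\pi$ and independent of the exponential holding variables $\tau_{m+1}$; hence $\{\xi_{m}\}_{m\ge 1}$ is an i.i.d.\ sequence and $Z(t)$ is a continuous-time random walk whose waiting times $\Delta t_{m}=t(K_{m},I_{m})\tau_{m+1}$ have finite mean $\mathbb{E}_{\pi}[\Delta t]=2/(\gamma\overline{R})$. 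This is exactly the situation covered by the general functional limit theorem of \cite{KJO}, so the proof reduces to verifying its hypotheses, the decisive one being the tail behaviour of $\xi_{1}$.

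The analytic heart is therefore the computation of $P_{\pi}(|\xi_{1}|>\lambda)$ as $\lambda\to\infty$. First I would localize the tail: away from $k=0$ all factors are bounded and the holding times have exponentially small tails, so the only source of heavy tails is a neighbourhood of $k=0$ on the slow phonon branch (type $i=2$ if $B>0$, $i=1$ if $B<0$), where $\theta_{i}(k)^{2}\sim k^{2}$ and the scattering rate $\gamma\theta_{i}(k)^{2}R(k)\sim k^{4}$ vanishes fastest while $\omega'(k)\sim k$ stays of order $k$. On this branch, conditionally on $K=k$ the increment $\xi_{1}$ is exponential with mean $m(k)=\frac{1}{2\pi}\omega'(k)\,t(k,i)\sim c_{B}\,k^{-3}$, where $c_{B}=|B|/(8\pi^{3}\gamma)$ is independent of $\widehat{\a}''(0)$, while the invariant density $\theta_{i}(k)^{2}R(k)/\overline{R}$ behaves like a constant multiple of $\widehat{\a}''(0)\,k^{4}$. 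Integrating $e^{-\lambda/m(k)}$ against this density and substituting $u=\lambda k^{3}/c_{B}$ yields, after $\lambda\to\infty$, the regularly varying tail
\begin{align*}
P_{\pi}(|\xi_{1}|>\lambda)\ \sim\ C_{\xi}\,\lambda^{-5/3}, \qquad C_{\xi}\ \propto\ \widehat{\a}''(0)\,|B|^{-1/3}\gamma^{-5/3},
\end{align*}
the complementary branch contributing only $O(\lambda^{-3})$. Because $\omega'$ is odd and $\pi$ is invariant under $k\mapsto -k$, the law of $\xi_{1}$ is symmetric; hence $\xi_{1}$ lies in the domain of normal attraction of a symmetric $5/3$-stable law.

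With this in hand I would invoke \cite{KJO}: the number of jumps in $[0,Nt]$ concentrates, $n(Nt)/N\to t\gamma\overline{R}/2$, and a partial sum of $n$ i.i.d.\ increments in the $5/3$-stable domain scales like $n^{3/5}$, so
\begin{align*}
N^{-3/5}Z(Nt)=\Big(\tfrac{n(Nt)}{N}\Big)^{3/5}\,n(Nt)^{-3/5}\!\!\sum_{m=1}^{n(Nt)}\xi_{m}
\end{align*}
converges in finite-dimensional distribution to a symmetric $5/3$-stable L\'evy process, i.e.\ one generated by $-D(-\Delta_{y})^{5/6}$. Tracking the constants, the scale of the limit is proportional to the product of the clock rate $\gamma\overline{R}/2$ and $C_{\xi}$, in which $\overline{R}$ cancels and leaves $D=C\,|B|^{-1/3}\gamma^{-2/3}\widehat{\a}''(0)$ with $C$ universal.

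The step I expect to be the main obstacle is making the tail asymptotics rigorous and uniform: one must show that the leading $\lambda^{-5/3}$ term is genuinely regularly varying (not merely of the right order) with the precise constant, control the error from the non-resonant region and from the subleading branch, and --- at the level of the continuous-time process --- show that the single incomplete sojourn at time $Nt$ contributes negligibly after the $N^{-3/5}$ scaling, even though one increment can itself be of order $N^{3/5}$. These are precisely the inputs that the abstract hypotheses of \cite{KJO} are designed to package, so the genuine work is checking them for the explicit kernel $R(k,k')=16\sin^{2}\pi k\,\sin^{2}\pi k'$ and the dispersion $\omega$ at hand.
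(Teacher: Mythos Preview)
Your proposal is correct and follows essentially the same route as the paper: both exploit that the embedded chain $\{(K_{n},I_{n})\}_{n\ge 1}$ is i.i.d.\ with law $\pi$, reduce the problem to a tail computation near $k=0$ on the slow phonon branch yielding regular variation with index $-5/3$, and then invoke the limit theorem of \cite{KJO}. The only cosmetic difference is that the paper computes the tail of $\Psi(k,i)=\omega'(k)t(k,i)$ under $\pi$ (which is exactly Condition~2.1 of \cite{KJO}) rather than the tail of $\xi_{1}=\Psi\,\tau_{1}$; the two differ only by the Gamma factor $A_{5/3}=\int_{0}^{\infty}e^{-y}y^{5/3}\,dy$, and the remaining Conditions~2.2, 2.3 and (2.12) of \cite{KJO} are trivial here since $Pf=\int f\,d\pi$.
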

\begin{remark}
In the case of $B = 0$,the finite-dimensional distribution of scaled processes \nolinebreak{$\{ N^{-\frac{2}{3}} Z(Nt) ; t \ge 0 \}$} converge weakly to a  L\'{e}vy  process generated by $ - D' (-\Delta_{y})^{\frac{3}{4}}$, where $D' = D'(\gamma,\a) $ is a positive constant such that
\begin{align*}
D' = C'\gamma^{-\frac{1}{2}} (\widehat{\a}''(0))^{\frac{3}{4}} ,
\end{align*}
and $C'$ is a positive constant which does not depend on $\gamma$, $\a$. It is essentially shown in \cite{KJO}.
\end{remark}

\subsection{Proof of Theorem \ref{thm:main3}.}\label{sec:proof3}
We apply \cite[Theorem 2.8 (i)]{KJO} to our process with $\a=\frac{5}{3}$. For this, 
it is enough to show that Conditions 2.1, 2.2, 2.3 and (2.12) of \cite{KJO} are satisfied.

First we verify that Condition 2.1 is satisfied. Define
\begin{align*}
\Psi(k,i) = \omega'(k) t(k,i) .
\end{align*}
The tail of $\Psi$ under $\pi$ is
\begin{align*}
\pi(\{ (k,i)  ; \Psi(k,i) \ge \lambda \}) & = \sum_{i = 1,2} \int_{ \{ k ; \Psi(k,i) \ge \lambda \} } dk \ \frac{\theta_{i}(k)^2R(k)}{\overline{R}} \\
&= C |B|^{- \frac{1}{3}} \gamma^{- \frac{5}{3}} \widehat{\a}''(0) \lambda^{-\frac{5}{3}}(1 + O(\lambda^{-\frac{4} {3}}) ) ,
\end{align*}
as $\lambda \to \infty$ because
\begin{align*}
&\theta_{1}(k)^2 \sim 1,  \  \textit{and }  \ \theta_{2}(k)^2 \sim \frac{\widehat{\a}''(0)k^{2}}{|B|^{2}} \textit{as} \ k \to 0 \ \textit{if} \ B>0 \\
&\theta_{1}(k)^2 \sim \frac{\widehat{\a}''(0)k^{2}}{|B|^{2}}   \  \textit{and }  \  \theta_{2}(k)^2 \sim 1  \textit{as} \ k \to 0 \ \textit{if} \ B<0 \ \\
\end{align*}
and
\begin{align*}
\omega'(k) \sim \frac{\widehat{\a}(0)'' k}{|B|},  \quad R(k) \sim k^{2}  \quad \textit{as} \ k \to 0.
\end{align*}
$C$ is a positive constant which does not depend on $B,\gamma,\a$. $\Psi$ is odd for $k$ and the density of $\pi(\cdot,i)$ with respect to the Lebesgue measure is even for k, so 
\[
\pi(\{ (k,i)  ; \Psi(k,i) \ge \lambda \}) = \pi(\{ (k,i)  ; \Psi(k,i) \le -\lambda \})
\]
and $ \int \Psi d\pi = 0$.

Next we check Condition 2.2. It is obvious that 
\begin{align*}
\sup \{ ||Pf||_{\mathbb{L}^{2}(\pi)} ; \int d\pi ~ f = 0 , ||f||_{\mathbb{L}^{2}(\pi)} = 1 \} = 0
\end{align*}
because $Pf =  \int d\pi ~ f$. 

Finally we show that Condition 2.3 and (2.12) hold. Condition 2.3 is obviously satisfied with $Q \equiv 0$ and $p \equiv 1$. Also, we have
\begin{align*}
||P\Psi||_{\mathbb{L}^{2}(\pi)}^{2} &= \sum_{i=1,2} \int_{\T} dk \left(\sum_{j=1,2} \int_{\T} dk' \Psi(k',j) \frac{t(k',j)^{-1}}{\gamma \overline{R}}\right)^{2} \frac{t(k,i)^{-1}}{\gamma \overline{R}} \\
&= \sum_{i=1,2} \int_{\T} dk \left(\sum_{j=1,2} \int_{\T} dk' \frac{\omega'(k')}{\gamma \overline{R}} \right)^{2} \frac{t(k,i)^{-1}}{\gamma \overline{R}} < \infty.
\end{align*}

Therefore, by \cite[Theorem 2.8 (i)]{KJO}, the finite-dimensional distributions of the scaled process $\{ N^{-\frac{3}{5}} Z(Nt)\}_{t \ge 0 }$ under $\mathbb{P}_{\pi}$ converge to a stable process with exponent $\frac{5}{3}$ whose characteristic function at time 1, denoted by $\phi(x)$ is
\begin{align*}
\phi(x) = \exp{(\int_{\R} d\lambda ~ e^{\sqrt{-1}\lambda x} c_{*}(\lambda) |\lambda|^{-\frac{8}{3}})},
\end{align*}
where
\begin{align*}
c_{*}(\lambda) = \frac{5C |B|^{- \frac{1}{3}} \gamma^{- \frac{5}{3}} \widehat{\a}''(0) A_{\frac{5}{3}}}{\bar{t}}
\end{align*}
for all $\lambda \neq 0$, C is a positive constant appeared in the tail estimate of $\Psi$ and 
\begin{align*}
A_{\frac{5}{3}} = \int_{0}^{\infty} dy ~ e^{-y} y^{\frac{5}{3}}, \\
\bar{t} = \int d\pi ~ t(k,i) ~ = \frac{1}{2\gamma}. 
\end{align*}

Finally we show that the finite-dimensional distributions of $\{ N^{-\frac{3}{5}} Z(Nt) ; t \ge 0 \}$ under $\mathbb{P}_{(k,i)}$ also converge to the same stable process for $k \in \T \setminus \{ 0 \} , ~ i=1,2$. For $t \ge 0$ define $n(t)$ as the nonnegative integer such that
\begin{align*}
t_{n(t)} \le t < t_{n(t) + 1}.
\end{align*}
Then we have
\begin{align*}
N^{-\frac{3}{5}} Z(Nt) = N^{-\frac{3}{5}} \sum_{n = 0}^{n(Nt)} \Psi(K_{n},I_{n}) \tau_{n+1}.
\end{align*}
If $k \neq 0$ then $N^{-\frac{3}{5}} \Psi(k,i) \tau_{1} \to 0 $ as $N \to \infty$ $\mathbb{P}_{(k,i)}$ - almost surely. Moreover, under $\mathbb{P}_{(k,i)}$, the distribution of $\{ ( K_{n} , I_{n} )\}_{n \ge 1 }$ is an i.i.d. sequence with distribution $\pi$. By Theorem 6.1 and Lemma 6.2 of \cite{KJO}, the finite-dimensional distributions of $\{ N^{-\frac{3}{5}} \sum_{n = 1}^{n(Nt)} \Psi(K_{n},I_{n}) \tau_{n+1} ; t \ge 0 \}$ under $\mathbb{P}_{(k,i)}$ converge to the stable process, so the finite-dimensional distributions of $\{ N^{-\frac{3}{5}} Z(Nt) ; t \ge 0 \}$ under $\mathbb{P}_{(k,i)}$ also converge to the same stable process if $k \neq 0$.

\section{Proof of the Theorem \ref{thm:main}.}\label{sec:proof1}

To simplify the notation, we define functions $\widehat{\Omega_{i}^{\epsilon}}_{+}(t)(p,k)$, $\widehat{\Gamma_{i}^{\epsilon}}_{+}(t)(p,k)$ on $\R \times \T$ by
\begin{align*}
\widehat{\Omega_{i}^{\epsilon}}_{+}(t)(p,k) &= \frac{\epsilon}{2} \mathbb{E}_{\epsilon} [ \widehat{\psi_{i}}(k-\frac{\epsilon p}{2},\frac{t}{\epsilon})^{*} ~  \widehat{\psi_{i}}(k+\frac{\epsilon p}{2},\frac{t}{\epsilon}) ] , \notag \\
\widehat{\Gamma_{i}^{\epsilon}}_{+}(t)(p,k) &= \frac{\epsilon}{2} \mathbb{E}_{\epsilon} [ \widehat{\psi_{i}}(-k+\frac{\epsilon p}{2},\frac{t}{\epsilon}) ~ \widehat{\psi_{i^{*}}}(k+\frac{\epsilon p}{2},\frac{t}{\epsilon}) ] 
\end{align*}
for $i=1,2$ where $i^{*} = 3-i$. We use the notation $i^*$ throughout the rest of the paper.
We also define $\widehat{\Omega_{i}^{\epsilon}}_{-}(t)(p,k)$, $\widehat{\Gamma_{i}^{\epsilon}}_{-}(t)(p,k)$ as
\begin{align*}
\widehat{\Omega_{i}^{\epsilon}}_{-}(t)(p,k) &= \widehat{\Omega_{i}^{\epsilon}}_{+}(t)(p,-k), \notag \\
\widehat{\Gamma_{i}^{\epsilon}}_{-}(t)(p,k) &= \widehat{\Gamma_{i}^{\epsilon}}_{+}(t)^{*}(-p,k).
\end{align*}
Note that for all $p \in \R$ these functions satisfy
\begin{align*}
||\widehat{\Omega_{i}^{\epsilon}}_{\iota}(t)(p,\cdot)||_{\mathbb{L}^{1}(\T)} \le \frac{1}{2} K_{0}, \quad ||\widehat{\Gamma_{i}^{\epsilon}}_{\iota}(t)(p,\cdot)||_{\mathbb{L}^{1}(\T)} \le \frac{1}{2} K_{0} 
\end{align*}
for $i=1,2 $, $\iota = +,-$ under the condition $(\ref{cond.initial})$. 
With this notation, Wigner distribution is rewritten as
\begin{align}\label{rewriteWigner}
<\Omega^{\epsilon}(t),\boldsymbol{J}> &= \sum_{i=1,2} \int_{\R} dp \int_{\T} dk ~ \widehat{\Omega_{i}^{\epsilon}}_{+}(t)(p,k) \widehat{J_{i}}(p,k)^{*}.
\end{align}

From now on we will show that the time evolution of $\Omega^{\epsilon}(\cdot)$ satisfies the following equation
\begin{align}\label{timeevo}
&\partial_{t} <\Omega^{\epsilon}(t),\boldsymbol{J}> \notag \\
&= \frac{1}{2\pi} <\Omega^{\epsilon}(t),\omega'(k) \partial_{y} \boldsymbol{J}> + \gamma<\Omega^{\epsilon}(t),C\boldsymbol{J}> \notag \\
& ~ + \gamma ( <\Gamma^{\epsilon}(t),C'\boldsymbol{J}> + <(\Gamma^{\epsilon})^{*}(t),C'\boldsymbol{J}>) + O_{\boldsymbol{J}}(\epsilon)
\end{align}
for $\boldsymbol{J} \in \mathbf{S}^{2}$ where 
\begin{align}\label{rewriteWigner2}
<\Gamma^{\epsilon}(t),\boldsymbol{J}> &= \sum_{i=1,2} \int_{\R} dp \int_{\T} dk ~  \widehat{\Gamma_{i}^{\epsilon}}_{+}(t)(p,k) \widehat{J_{i}}(p,k)^{*} ,\notag \\
<(\Gamma^{\epsilon})^{*}(t),\boldsymbol{J}> &= \sum_{i=1,2} \int_{\R} dp \int_{\T} dk ~ \widehat{\Gamma_{i}^{\epsilon}}_{-}(t)(p,k) \widehat{J_{i}}(p,k)^{*}
\end{align}
and 
\begin{align*}
(C'\boldsymbol{J})_{i}(p,k) = \int_{\T} dk' ~ \theta_{1}(k) \theta_{2}(k) R(k,k') \theta_{i^{*}}^{2}(k') J_{i^*}(p,k') + \widehat{\b}(k) \theta_{1}(k) \theta_{2}(k) J_{i}(p,k).
\end{align*}
Here, $O_{\boldsymbol{J}}(\epsilon)$ is a term which satisfies
\begin{align*}
\frac{O_{\boldsymbol{J}}(\epsilon)}{\epsilon} \le C_{\boldsymbol{J}} 
\end{align*}
for all $0 < \epsilon < 1$ with a positive constant $C_{\boldsymbol{J}}$ which depends on $\boldsymbol{J}$. 


By $(\ref{defofpsi})$ the time evolution of $\widehat{\Omega_{i}^{\epsilon}}_{+}(t)(p,k)$ is 
\begin{align}\label{evoofWigner}
&\partial_{t} \widehat{\Omega_{i}^{\epsilon}}_{+}(t)(p,k) \notag \\
&= - \frac{\sqrt{-1}}{\epsilon}({\omega}_i(k+\frac{\epsilon p}{2}) - {\omega}_i(k-\frac{\epsilon p}{2})) \widehat{\Omega_{i}^{\epsilon}}_{+}(t)(p,k) \notag \\
& ~ + \gamma (\widehat{\b}(k+\frac{\epsilon p}{2}) \theta_{i}^{2}(k+\frac{\epsilon p}{2}) + \widehat{\b}(k-\frac{\epsilon p}{2}) \theta_{i}^{2}(k-\frac{\epsilon p}{2})) \widehat{\Omega_{i}^{\epsilon}}_{+}(t)(p,k) \notag \\
& ~ + \gamma \widehat{\b}(k+\frac{\epsilon p}{2}) \theta_{i}(k+\frac{\epsilon p}{2}) \theta_{i^{*}}(k+\frac{\epsilon p}{2}) \widehat{\Gamma_{i^{*}}^{\epsilon}}_{-}(t)(p,k) \notag \\
& ~ + \gamma \widehat{\b}(k-\frac{\epsilon p}{2}) \theta_{i}(k-\frac{\epsilon p}{2}) \theta_{i^{*}}(k-\frac{\epsilon p}{2})\widehat{\Gamma_{i^{*}}^{\epsilon}}_{+}(t)(p,k) \notag \\
& + \gamma \theta_{i}(k-\frac{\epsilon p}{2}) \theta_{i}(k+\frac{\epsilon p}{2}) \int_{\T} dk' R_{\epsilon p}(k,k') \notag \\
& ~ ~ \times [\theta_{i}(k'-\frac{\epsilon p}{2}) \theta_{i}(k'+\frac{\epsilon p}{2}) \widehat{\Omega_{i}^{\epsilon}}_{+}(t)(p,k') + \theta_{i^{*}}(k'-\frac{\epsilon p}{2}) \theta_{i^{*}}(k'+\frac{\epsilon p}{2}) \widehat{\Omega_{i^{*}}^{\epsilon}}_{+}(t)(p,k')) \notag \\
& ~ ~ ~ + \theta_{i}(k'-\frac{\epsilon p}{2}) \theta_{i^{*}}(k'+\frac{\epsilon p}{2}) \widehat{\Gamma_{i^{*}}^{\epsilon}}_{-}(t)(p,k') + \theta_{i}(k'+\frac{\epsilon p}{2}) \theta_{i^{*}}(k'-\frac{\epsilon p}{2}) \widehat{\Gamma_{i^{*}}^{\epsilon}}_{+}(t)(p,k')], 
\end{align}
where 
\begin{align*}
R_{\epsilon p}(k,k') = 16 \sin{(k+\frac{\epsilon p}{2})}\sin{(k-\frac{\epsilon p}{2})}\sin{(k'+\frac{\epsilon p}{2})}\sin{(k'-\frac{\epsilon p}{2})}.
\end{align*}
For the derivation of $(\ref{evoofWigner})$, see Appendix \ref{evolution}.


Since $\widehat{\b}, \theta_{i}$ and ${\omega}_i$ are smooth on $\T$, the term $(\ref{evoofWigner})$ is rewritten as
\begin{align}\label{evoofWigner2}
&\partial_{t} \widehat{\Omega_{i}^{\epsilon}}_{+}(t)(p,k) \notag \\
&= - \sqrt{-1} p \omega'_i(k) \widehat{\Omega_{i}^{\epsilon}}_{+}(t)(p,k) + 2 \gamma \widehat{\b}(k) \theta_{i}^{2}(k) \widehat{\Omega_{i}^{\epsilon}}_{+}(t)(p,k) \notag \\
& ~ + \gamma \widehat{\b}(k) \theta_{i}(k) \theta_{i^{*}}(k) (\widehat{\Gamma_{i^{*}}^{\epsilon}}_{-}(t)(p,k) + \widehat{\Gamma_{i^{*}}^{\epsilon}}_{+}(t)(p,k)) \notag \\
& + \gamma \theta_{i}^{2}(k) \int_{\T} dk' R(k,k') [\theta_{i}^{2}(k') \widehat{\Omega_{i}^{\epsilon}}_{+}(t)(p,k') + \theta_{i^{*}}^{2}(k') \widehat{\Omega_{i^{*}}^{\epsilon}}_{+}(t)(p,k') \notag \\
& ~ ~ + \theta_{i}(k') \theta_{i^{*}}(k') \widehat{\Gamma_{i^{*}}^{\epsilon}}_{-}(t)(p,k') + \theta_{i}(k') \theta_{i^{*}}(k') \widehat{\Gamma_{i^{*}}^{\epsilon}}_{+}(t)(p,k')] + \mathcal{R}_{i}(p,k), \end{align}
where $\mathcal{R}_{i}, i=1,2$ are the remainder terms and these satisfy
\begin{align*}
||\mathcal{R}_{i}(p,\cdot)||_{\mathbb{L}^{1}(\T)} \le C(T,B,\gamma,\a,K_{0}) |p| \epsilon 
\end{align*}
for all $p \in \R$. Then for any $\boldsymbol{J} \in \mathbf{S}^{2}$,
\begin{align}\label{remainder}
\int_{\R} dp \int_{\T} dk ~ \mathcal{R}_{i}(p,k) \widehat{J_{i}}(p,k)^{*} = O_{\boldsymbol{J}}(\epsilon).
\end{align}
Combining \eqref{rewriteWigner}, \eqref{rewriteWigner2}, \eqref{evoofWigner2} and \eqref{remainder} with the relation $\int_{\T} dk' R(k,k') = -2 \widehat{\b}(k)$, we conclude that \eqref{timeevo} holds.

From \eqref{lbb} and \eqref{timeevo}, for any fixed $T>0$ and $\boldsymbol{J} \in \mathbf{S}^{2}$, $\{ < \Omega^{\epsilon}(\cdot),\boldsymbol{J}>  \}_{ 0<\epsilon <1} \subset C([0,T],\C) $ is uniformly bounded and equicontinuous. Hence, for each $\boldsymbol{J} \in \mathbf{S}^{2}$, there exists a subsequence $\epsilon_N \downarrow 0$ such that $ < \Omega^{\epsilon_N}(\cdot),\boldsymbol{J}>$ converges to a function in $C([0,T], \C)$ uniformly as $N \to \infty$. Since $ \mathbf{S}^{2}$ is separable, there is a dense countable subset $\{ \boldsymbol{J}^{(m)} ; m \in \N \}$ of $\mathbf{S}^2$ and by the diagonal argument we can find a sequence $\epsilon_N \downarrow 0$ such that $ < \Omega^{\epsilon_N}(\cdot),\boldsymbol{J}^{(m)}> $ converges for all $m \in \N$. Now, we show that for all $J \in \mathbf{S}^2$, $ < \Omega^{\epsilon_N}(\cdot),\boldsymbol{J}> $ converges uniformly to a continuous function as $N \to \infty$. Fix $\boldsymbol{J} \in \mathbf{S}^2$ and $\delta > 0$. Since $\{ \boldsymbol{J}^{(m)} ; m \in \N \}$ is dense we can take some $\boldsymbol{J}^{(l)}$ so that $||\boldsymbol{J} - \boldsymbol{J}^{(l)}|| < \delta$.  Then for any $n,m \in \N$ we have
\begin{align*}
&\sup_{t \in [0,T]} | <\Omega^{\epsilon_n}(t),\boldsymbol{J}> - <\Omega^{\epsilon_m}(t),\boldsymbol{J}> | \\
&\le \sup_{t \in [0,T]} | <\Omega^{\epsilon_n}(t),\boldsymbol{J}> - <\Omega^{\epsilon_n}(t),\boldsymbol{J}^{(l)}> | \\
& ~ + \sup_{t \in [0,T]} | <\Omega^{\epsilon_n}(t),\boldsymbol{J}^{(l)}> - <\Omega^{\epsilon_m}(t),\boldsymbol{J}^{(l)}> | \\
& ~ + \sup_{t \in [0,T]} | <\Omega^{\epsilon_m)}(t),\boldsymbol{J}^{(l)}> - <\Omega^{\epsilon_m}(t),\boldsymbol{J}> | \\
&\le K_{0} \delta +  \sup_{t \in [0,T]} | <\Omega^{\epsilon_n}(t),\boldsymbol{J}^{(l)}> - <\Omega^{\epsilon_m}(t),\boldsymbol{J}^{(l)}> |.
\end{align*}
by \eqref{lbb}.
Hence, for sufficiently large $n,m$, 
\[
\sup_{t \in [0,T]} | <\Omega^{\epsilon_n}(t),\boldsymbol{J}> - <\Omega^{\epsilon_m}(t),\boldsymbol{J}> |\le (1+K_0) \delta
\]
and so
$ < \Omega^{\epsilon_N}(\cdot),\boldsymbol{J}> $ converges uniformly.

In Appendix \ref{positivity}, we prove that for any $t \ge 0$, any limit of a weak-* convergent subsequence of $\{ \Omega^{\epsilon}(t) \}_{\epsilon}$ can be extended to a vector-valued finite positive measures on $\R \times \T$. 
The uniqueness of solutions of the equation (\ref{mboltzmann}) is shown in Appendix \ref{uniqueness}. 

Hence, noting that $\omega'(k) \partial_y\boldsymbol{J}, C\boldsymbol{J}, C'\boldsymbol{J} \in \mathbf{S}^{2}$ for any $\boldsymbol{J} \in \mathbf{S}^{2}$, by \eqref{timeevo} and the following lemma we conclude the proof of Theorem \ref{thm:main}. 

\begin{lemma}
For any $T>0$ and $\boldsymbol{J} \in \mathbf{S}^{2}$,
\begin{align*}
\lim_{\epsilon \to 0} |\int_{0}^{T} dt ~ <\Gamma^{\epsilon}(t),\boldsymbol{J}> | = 0 , \\
\lim_{\epsilon \to 0} |\int_{0}^{T} dt ~ <(\Gamma^{\epsilon})^{*}(t),\boldsymbol{J}> | = 0.
\end{align*}
\end{lemma}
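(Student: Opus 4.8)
The lemma asserts that the anomalous correlation terms $\widehat{\Gamma_i^\epsilon}_\pm$, which couple wave functions of different phonon types through the factor $\psi_i \psi_{i^*}$, do not contribute to the limiting Boltzmann equation after time-integration. The key physical mechanism is that $\widehat{\Gamma_i^\epsilon}_+(t)(p,k)$ carries a rapidly oscillating phase governed by the \emph{sum} of dispersion relations $\omega_i + \omega_{i^*} = \omega_1 + \omega_2$, which never vanishes, rather than the difference $\omega_i(k+\tfrac{\epsilon p}{2}) - \omega_i(k-\tfrac{\epsilon p}{2}) = O(\epsilon)$ appearing in the diagonal term $\widehat{\Omega_i^\epsilon}_+$. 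This non-degeneracy of the phase forces the time-integral to be $O(\epsilon)$ by a stationary-phase / integration-by-parts argument.

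\medskip

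The plan is as follows. First I would derive, analogously to \eqref{evoofWigner} but for the cross term, the microscopic evolution equation for $\widehat{\Gamma_i^\epsilon}_+(t)(p,k)$ from the It\^o equations \eqref{defofpsi}. The crucial structural feature is that its leading term reads
\begin{align*}
\partial_t \widehat{\Gamma_i^\epsilon}_+(t)(p,k) = -\frac{\sqrt{-1}}{\epsilon}\bigl(\omega_{i^*}(k+\tfrac{\epsilon p}{2}) + \omega_{i}(k-\tfrac{\epsilon p}{2})\bigr)\widehat{\Gamma_i^\epsilon}_+(t)(p,k) + \gamma(\cdots),
\end{align*}
where the prefactor $\tfrac{1}{\epsilon}(\omega_{i^*}+\omega_i)$ diverges like $\epsilon^{-1}$ since $\omega_1(k)+\omega_2(k) = 2\sqrt{\widehat\a(k)+B^2/4} \ge |B| > 0$ is bounded away from zero uniformly in $k$. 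The remaining terms on the right-hand side are $O(1)$ (bounded uniformly in $\epsilon$ through the $\mathbb{L}^1(\T)$ bounds $\tfrac12 K_0$ already established for all the $\widehat\Omega$ and $\widehat\Gamma$ quantities).

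\medskip

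Second, I would exploit this non-vanishing oscillation. Writing $g^\epsilon(t)(p,k) = \tfrac{1}{\epsilon}(\omega_{i^*}(k+\tfrac{\epsilon p}{2})+\omega_i(k-\tfrac{\epsilon p}{2}))$, one rewrites the evolution equation as $\widehat{\Gamma_i^\epsilon}_+ = \tfrac{\sqrt{-1}}{g^\epsilon}\,\partial_t\widehat{\Gamma_i^\epsilon}_+ + \tfrac{\sqrt{-1}}{g^\epsilon}\cdot\gamma(\cdots)$ and integrates against $\widehat{J_i}(p,k)^*$ over $\R\times\T$ and then in time over $[0,T]$. Integration by parts in $t$ on the first term produces boundary contributions $\widehat{\Gamma_i^\epsilon}(0)$ and $\widehat{\Gamma_i^\epsilon}(T)$ divided by $g^\epsilon = \Theta(\epsilon^{-1})$, hence $O(\epsilon)$; the second term is the product of the bounded $\gamma(\cdots)$ factor with $1/g^\epsilon = O(\epsilon)$, again $O(\epsilon)$. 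Summing over $i=1,2$ and using $|\widehat{J_i}(p,\cdot)|$ integrable in $p$ (so that the $\mathbb{L}^1(\T)$ bounds can be integrated against $J$ in the spirit of \eqref{remainder}) gives the claim for $\Gamma^\epsilon$; the statement for $(\Gamma^\epsilon)^*$ follows identically since $\widehat{\Gamma_i^\epsilon}_-$ is just the complex conjugate reflection of $\widehat{\Gamma_i^\epsilon}_+$.

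\medskip

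The main obstacle is the bookkeeping of the $O(1)$ terms $\gamma(\cdots)$ on the right-hand side of the $\Gamma$-evolution: these involve both $\widehat\Omega^\epsilon$ and $\widehat\Gamma^\epsilon$ quantities paired with the smooth, bounded factors $\widehat\b,\theta_i,\theta_{i^*}$ and the scattering kernel $R(k,k')$, and one must confirm that every such term stays uniformly bounded in $\mathbb{L}^1(\T)$ after integration in $p$ against $\widehat{J_i}$, so that dividing by $g^\epsilon$ genuinely yields $O(\epsilon)$. A secondary technical point is justifying the time integration by parts rigorously — in particular controlling $\partial_t g^\epsilon$, which is $O(1)$ and smooth, so the extra term generated is harmless but must be accounted for. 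Once the uniform $\mathbb{L}^1$ bounds are in place, the argument is a clean non-stationary-phase estimate driven entirely by the spectral gap $\omega_1+\omega_2\ge|B|>0$.
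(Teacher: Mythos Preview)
Your proposal is correct and follows essentially the same approach as the paper: both derive the evolution equation for $\widehat{\Gamma_i^\epsilon}_+$, isolate the diverging phase $\tfrac{1}{\epsilon}(\omega_1+\omega_2)\ge |B|/\epsilon$, and integrate in time to kill the contribution. The paper packages this slightly differently---integrating $\partial_t\langle\Gamma^\epsilon,\boldsymbol{J}\rangle$ over $[0,T]$ to obtain $\int_0^T\langle\Gamma^\epsilon,(\omega_1+\omega_2)\boldsymbol{J}\rangle\,dt\to 0$ and then substituting $(\omega_1+\omega_2)^{-1}\boldsymbol{J}\in\mathbf{S}^2$---whereas you divide by $g^\epsilon$ pointwise in $(p,k)$ before pairing; the two are equivalent (and note that $g^\epsilon$ is time-independent, so your concern about $\partial_t g^\epsilon$ is moot).
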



\begin{proof}
By $(\ref{defofpsi})$ the time evolution of $\widehat{\Gamma_{i}^{\epsilon}}_{+}(t)(p,k), i =1,2$ is 
\begin{align*}
&\partial_{t} \widehat{\Gamma_{i}^{\epsilon}}_{+}(t)(p,k) \\
&= - \frac{\sqrt{-1}}{\epsilon} ({\omega}_i(k-\frac{\epsilon p}{2}) + \omega_{i^{*}}(k+\frac{\epsilon p}{2})) \widehat{\Gamma_{i}^{\epsilon}}_{+}(t)(p,k) \\
& ~ + \gamma (\widehat{\b}(k-\frac{\epsilon p}{2}) \theta_{i}^{2}(k-\frac{\epsilon p}{2}) + \widehat{\b}(k+\frac{\epsilon p}{2}) \theta_{i^{*}}^{2}(k+\frac{\epsilon p}{2}))\widehat{\Gamma_{i}^{\epsilon}}_{+}(t)(p,k) \\
& ~ + \gamma \widehat{\b}(k+\frac{\epsilon p}{2}) \theta_{i}(k+\frac{\epsilon p}{2}) \theta_{i^{*}}(k+\frac{\epsilon p}{2})\widehat{\Omega_{i}^{\epsilon}}_{-}(t)(p,k) \\
& ~ + \gamma \widehat{\b}(k-\frac{\epsilon p}{2}) \theta_{i}(k-\frac{\epsilon p}{2}) \theta_{i^{*}}(k-\frac{\epsilon p}{2})\widehat{\Omega_{i^{*}}^{\epsilon}}_{+}(t)(p,k) \\
& ~ + \gamma \theta_{i}(k-\frac{\epsilon p}{2}) \theta_{i^{*}}(k+\frac{\epsilon p}{2}) \int_{\T} dk' R_{\epsilon p}(k,k') \notag \\
& ~ ~ \times [\theta_{i}(k'-\frac{\epsilon p}{2}) \theta_{i}(k'+\frac{\epsilon p}{2}) \widehat{\Omega_{i}^{\epsilon}}_{+}(t)(p,k') + \theta_{i^{*}}(k'-\frac{\epsilon p}{2}) \theta_{i^{*}}(k'+\frac{\epsilon p}{2}) \widehat{\Omega_{i^{*}}^{\epsilon}}_{+}(t)(p,k')) \notag \\
& ~ ~ ~ + \theta_{i}(k'-\frac{\epsilon p}{2}) \theta_{i^{*}}(k'+\frac{\epsilon p}{2}) \widehat{\Gamma_{i}^{\epsilon}}_{+}(t)(p,k') + \theta_{i}(k'+\frac{\epsilon p}{2}) \theta_{i^{*}}(k'-\frac{\epsilon p}{2}) \widehat{\Gamma_{i}^{\epsilon}}_{-}(t)(p,k')] .
\end{align*}
Since $\widehat{\b}$, $\theta_{i}$ and ${\omega}_i$ are smooth on $\T$, the above term is rewritten as
\begin{align}\label{evoofWigner3}
&\partial_{t} \widehat{\Gamma_{i}^{\epsilon}}_{+}(t)(p,k) \notag \\
&= - \frac{\sqrt{-1}}{\epsilon} ({\omega}_i(k) + \omega_{i^{*}}(k)) \widehat{\Gamma_{i}^{\epsilon}}_{+}(t)(p,k) + \gamma (\widehat{\b}(k) \theta_{i}^{2}(k) + \widehat{\b}(k) \theta_{i^{*}}^{2}(k))\widehat{\Gamma_{i}^{\epsilon}}_{+}(t)(p,k)  \notag \\
& ~ + \gamma \widehat{\b}(k) \theta_{i}(k) \theta_{i^{*}}(k)\widehat{\Omega_{i}^{\epsilon}}_{-}(t)(p,k) + \gamma \widehat{\b}(k) \theta_{i}(k) \theta_{i^{*}}(k)\widehat{\Omega_{i^{*}}^{\epsilon}}_{+}(t)(p,k) \notag \\
& ~ + \gamma \theta_{i}(k) \theta_{i^{*}}(k) \int_{\T} dk' R(k,k') [\theta_{i}^{2}(k') \widehat{\Omega_{i}^{\epsilon}}_{+}(t)(p,k') + \theta_{i^{*}}^{2}(k') \widehat{\Omega_{i^{*}}^{\epsilon}}_{+}(t)(p,k')) \notag \\
& ~ ~ ~ + \theta_{i}(k') \theta_{i^{*}}(k') \widehat{\Gamma_{i}^{\epsilon}}_{+}(t)(p,k') + \theta_{i}(k') \theta_{i^{*}}(k') \widehat{\Gamma_{i}^{\epsilon}}_{-}(t)(p,k')] + \mathcal{R}_{i + 2}(p,k) 
\end{align}
for $i =1,2$ where $\mathcal{R}_{i}, i=3,4$ are the remainder terms which satisfy
\begin{align}\label{remainder2}
||\mathcal{R}_{i}(p,\cdot)||_{\mathbb{L}^{1}(\T)} \le C(T,B,\gamma,\a,K_{0}) |p| (1+\epsilon)
\end{align}
for all $p \in \R$. Hence, for any $\boldsymbol{J} \in \mathbf{S}^{2}$ and $i=1,2$,
\begin{align*}
\int_{\R} dp \int_{\T} dk ~ \mathcal{R}_{i+2}(p,k) \widehat{J_{i}}(p,k)^{*} = O_{\boldsymbol{J} }(1).
\end{align*}
Combining $(\ref{rewriteWigner})$, $(\ref{evoofWigner3})$ and $(\ref{remainder2})$ we have
\begin{align*}
&\partial_{t} <\Gamma^{\epsilon}(t),\boldsymbol{J}> \\
&= - \frac{\sqrt{-1}}{\epsilon} <\Gamma^{\epsilon},({\omega}_1+{\omega}_2)\boldsymbol{J} > + <\Omega^{\epsilon},R'\boldsymbol{J} > + <\Omega^{\epsilon},R'\boldsymbol{J}^{t}> + <\Gamma^{\epsilon},R''\boldsymbol{J}> \\
& ~ + <(\Gamma^{\epsilon})^{*},R''\boldsymbol{J}> + <\Omega^{\epsilon},\b'\boldsymbol{J}^{t}> + <(\Omega^{\epsilon})^{*},\b'\boldsymbol{J}> + <\Gamma^{\epsilon},\b \boldsymbol{J}> + ~ O_{\boldsymbol{J}}(1) , 
\end{align*}
where $\boldsymbol{J}^{t} = (J_{2},J_{1})$ and
\begin{align*}
& <(\Omega^{\epsilon})^{*}(t),\boldsymbol{J}> = \sum_{i=1,2} \int_{\R} dp \int_{\T} dk  ~ \widehat{\Omega_{i}^{\epsilon}}_{-}(t)(p,k) \widehat{J_{i}}(p,k)^{*}, \\
&\b'(k) = \theta_{1}(k) \theta_{2}(k) \b(k), \\
&(R'\boldsymbol{J})_{i}(p,k) = \int_{\T} dk' \theta_{i}(k)^2 R(k,k') \theta_{1}(k')\theta_{2}(k') J_{i}(p,k'), \\
&(R''\boldsymbol{J})_{i}(p,k) = \int_{\T} dk' \theta_{1}(k) \theta_{2}(k) R(k,k') \theta_{1}(k')\theta_{2}(k') J_{i}(p,k').
\end{align*}
Therefore, we have
\begin{align*}
\lim_{\epsilon \to 0} |\int_{0}^{T} dt ~ <\Gamma^{\epsilon}(t),({\omega}_1+{\omega}_2)\boldsymbol{J} > | = 0 
\end{align*}
for all $\boldsymbol{J} \in \mathbf{S}^{2}$. Since ${\omega}_1(k) + {\omega}_2(k) $ is uniformly bounded by positive constants from above and below, $({\omega}_1 + {\omega}_2)^{-1}\boldsymbol{J} \in \mathbf{S}^2$ for all $\boldsymbol{J}  \in \mathbf{S}^2$. Hence we conclude that 
\begin{align*}
\lim_{\epsilon \to 0} |\int_{0}^{T} dt ~ <\Gamma^{\epsilon}(t),\boldsymbol{J}> | = 0 
\end{align*}
for all $\boldsymbol{J} \in \mathbf{S}^{2}$. 

For $(\Gamma^{\epsilon})^{*}$ we can apply the same proof. 

\end{proof}

\section{Proof of Theorem \ref{thm:main2}.}\label{sec:proof2}

We use the Markov chain $(K(t),I(t))$ introduced in Section \ref{markov}. First note that for any $u_0 \in C^{\infty}_0(\R \times \T \times \{1,2\})$,
\[
u_N(y,k,i,t) = \mathbb{E}_{(k,i)}[u_{0}(Z_{N}(t),K(t),I(t))].
\]
where 
\begin{align*}
Z_{N}(t) = y + \frac{1}{2\pi N^{\frac{3}{5}}} \int_{0}^{t} ds ~ \omega'(K(s)).
\end{align*}
Then, by using the Fourier transform we can write
\begin{align*}
u_N(y,k,i,Nt) &= \mathbb{E}_{(k,i)}[u_{0}(Z_{N}(Nt),K(Nt),I(Nt))] \\
&= \sum_{x \in \Z} \int_{\R} dp \sum_{j = 1,2} \widetilde{u_{0}} (p,x,j) \mathbb{E}_{(k,i)}[e^{\sqrt{-1} p Z_{N}(Nt)} e^{\sqrt{-1} x K(Nt)} 1_{\{ I(Nt) = j \} } ] ,
\end{align*}
where $\widetilde{u}(p,x,i)$ is the Fourier transform of $u(y,k,i)$. 
Denote by $di$ the counting measure on $\{1,2 \}$. Let $P^{t} , t \ge 0$ be the semigroup generated by $\mathcal{L}$. Since $\frac{1}{2}dkdi$ is a reversible probability measure of the process $\{ ( K(t) , I(t) )\}_{t \ge 0}$ and $0$ is a simple eigenvalue for the generator $\mathcal{L}$, we have
\begin{align*}
\lim_{t \to \infty} ||P^{t}f||_{\mathbb{L}^{2}(\T \times \{ 1,2 \} )} = 0 
\end{align*}
for any $f \in \mathbb{L}^{2}(\T \times \{ 1,2 \} )$ satisfying $\int_{\T \times \{1,2 \}} dkdi f(k,i) = 0$ by the ergodicity and the reversibility (cf. Theorem 1.6.1, 1.6.3 and Exercise 4.7.2 of \cite{FOT}). Let $\{ m_{N}\}_{N \in \N  } $ be an increasing sequence of positive numbers such that
\begin{align*}
\lim_{N \to \infty} m_{N} = \infty , \\
\lim_{N \to \infty} m_{N} N^{-\frac{3}{5}} = 0.
\end{align*}
Then for any $t \ge 0 , p \in \R , x \in \Z$ and $j = 1,2$ 
\begin{align*}
& \left|\mathbb{E}_{(k,i)}[e^{\sqrt{-1} p Z_{N}(Nt)} e^{\sqrt{-1} x K(Nt)} 1_{\{ I(Nt) = j \} }] - \mathbb{E}_{(k,i)}[e^{\sqrt{-1} p Z_{N}(Nt - m_{N}t)} e^{\sqrt{-1} x K(Nt)} 1_{\{ I(Nt) = j \} }]  \right|\\
&\le \mathbb{E}_{(k,i)}[|1 - e^{\sqrt{-1} p (Z_{N}(Nt) - Z_{N}(Nt - m_{N}t))}|] \\
&\le \mathbb{E}_{(k,i)}[|p (Z_{N}(Nt) - Z_{N}(Nt - m_{N}t))|] 
\end{align*}
since $|1-e^{\sqrt{-1}a}| \le |a|$ for any $a \in \R$. The last expression converges to $0$ as $N \to \infty$ since \begin{align*}
\mathbb{E}_{(k,i)}[|p (Z_{N}(Nt) - Z_{N}(Nt - m_{N}t))|]  &=
 \mathbb{E}_{(k,i)}[|p \frac{1}{2\pi N^{\frac{3}{5}}} \int_{Nt - m_{N}t}^{Nt} ds ~ \omega'(K(s))|] \\
&\le \|\omega'\|_{\infty} t |p| m_{N} N^{-\frac{3}{5}} \to 0 
\end{align*}
where $ \|\omega'\|_{\infty}=\sup_{k} | \omega'(k)|$. By the Markov property
\begin{align*}
&\mathbb{E}_{(k,i)}[e^{\sqrt{-1} p Z_{N}(Nt - m_{N}t)} e^{\sqrt{-1} x K(Nt)} 1_{\{ I(Nt) = j \} }] \\
&= \mathbb{E}_{(k,i)}[e^{\sqrt{-1} p Z_{N}(Nt - m_{N}t)} \mathbb{E}_{(K(Nt - m_{N}t),I(Nt - m_{N}t))} [e^{\sqrt{-1} x K(m_{N}t)} 1_{\{ I(m_{N}t) = j \} }] ] .
\end{align*}
By the Schwarz's inequality,
\begin{align}\label{0inl^{2}}
& \big|\mathbb{E}_{(k,i)}[e^{\sqrt{-1} p Z_{N}(Nt - m_{N}t)} \mathbb{E}_{(K(Nt - m_{N}t),I(Nt - m_{N}t))} [e^{\sqrt{-1} x K(m_{N}t)} 1_{\{ I(m_{N}t) = j \} }] ] \notag \\
&  ~ - \mathbb{E}_{(k,i)}[e^{\sqrt{-1} p Z_{N}(Nt - m_{N}t)} \mathbb{E}_{(K(Nt - m_{N}t),I(Nt - m_{N}t))}[ \frac{1}{2} \int_{\T} dk' e^{\sqrt{-1} x k'}]] \big| \notag \\
&\le \mathbb{E}_{(k,i)}[|\mathbb{E}_{(K(Nt - m_{N}t),I(Nt - m_{N}t))}[e^{\sqrt{-1} x K(m_{N}t)} 1_{\{ I(m_{N}t) = j \} } - \frac{1}{2} \int_{\T} dk' e^{\sqrt{-1} x k'}]|^{2}]^{\frac{1}{2}} .
\end{align}
Let $g(k,i) = e^{\sqrt{-1} x k} 1_{\{ j \} }(i) - \frac{1}{2} \int_{\T} dk' e^{\sqrt{-1} x k'}$. Since $\frac{1}{2}dkdi$ is the reversible probability measure  we have
\begin{align*}
&\int_{\T \times \{1,2 \}} dkdi ~ \mathbb{E}_{(k,i)}[|\mathbb{E}_{(K(Nt - m_{N}t),I(Nt - m_{N}t))}[e^{\sqrt{-1} x K(m_{N}t)}1_{\{ I(m_{N}t) = j \} } - \frac{1}{2} \int_{\T} dk' e^{\sqrt{-1} x k'}]|^{2}] \\
&= ||P^{m_{N}t}g||_{\mathbb{L}^{2}(\T \times \{ 1,2 \} )}^{2} .
\end{align*}
Hence we conclude that \eqref{0inl^{2}} converges to 0 in $\mathbb{L}^{2}(\T \times \{ 1,2 \} )$ as $N \to \infty$ 
since $\int_{\T \times \{1,2 \}} dkdi ~ g(k,i) = 0$.

Summarizing the above and applying the dominated convergence theorem, we have 
\begin{align*}
&\lim_{N \to \infty} \int_{\T \times \{1,2 \}} dk di \sum_{x \in \Z} \int_{\R} dp \sum_{j = 1,2} | \widetilde{u_{0}} (p,x,j) | \\
&\times ~ |\mathbb{E}_{(k,i)}[e^{\sqrt{-1} p Z_{N}(Nt)} e^{\sqrt{-1} x K(Nt)} 1_{\{ I(Nt) = j \} } ] - \mathbb{E}_{(k,i)}[e^{\sqrt{-1} p Z_{N}(Nt - m_{N}t)} \frac{1}{2} \int_{\T} dk' e^{\sqrt{-1} x k'}] |^{2} \\
&= 0.
\end{align*}
Note that 
\begin{align*}
& \sum_{x \in \Z} \int_{\R} dp \sum_{j = 1,2} \widetilde{u_{0}} (p,x,j) \mathbb{E}_{(k,i)}[e^{\sqrt{-1} p Z_{N}(Nt - m_{N}t)} \frac{1}{2} \int_{\T} dk' e^{\sqrt{-1} x k'}] \\
&= \mathbb{E}_{(k,i)} [ \frac{1}{2} \int_{\T \times \{1,2 \}} dk' dj ~ u_{0}(Z_{N}(Nt - m_{N}t),k',j) ], \\
& =  \frac{1}{2} \mathbb{E}_{(k,i)} [\bar{u}_0(Z_{N}(Nt - m_{N}t))].
\end{align*}
By Theorem \ref{thm:main3}, $Z_{N}(Nt - m_{N}t)$ converges to a  L\'{e}vy  process starting from $y$ and generated by $D (-\Delta_{y})^{\frac{5}{6}}$ and so the last term converges to $\bar{u}(y,t)$ given in \eqref{boltzmann2} for $k \neq 0, ~ i=1,2$. 
Therefore,
\[
\frac{1}{2}\mathbb{E}_{(k,i)} [\bar{u}_0(Z_{N}(Nt - m_{N}t))] \to \frac{1}{2}\bar{u}(y,t) \quad a.e. 
\]
and by the dominated convergence theorem,
\begin{align*}
 & \limsup_{N \to \infty} \int_{\T \times \{1,2 \}} dkdi | u_N(y,k,i,Nt) - \frac{1}{2}\bar{u}(y,t) |^{2} \\
& \le \limsup_{N \to \infty}\int_{\T \times \{1,2 \}} dkdi | u_N(y,k,i,Nt) - \frac{1}{2} \mathbb{E}_{(k,i)} [\bar{u}_0(Z_{N}(Nt - m_{N}t))] |^{2}.
\end{align*}
Applying the Fourier transform, the last term is bounded from above by 
\begin{align*}
& \limsup_{N\to \infty} \left(\sum_{x \in \Z} \int_{\R} dp \sum_{j = 1,2} | \widetilde{u_{0}} (p,x,j) |\right) \int_{\T \times \{1,2 \}} dkdi \sum_{x \in \Z} \int_{\R} dp \sum_{j = 1,2} | \widetilde{u_{0}} (p,x,j) |  \\
& \quad \quad \times |\mathbb{E}_{(k,i)}[e^{\sqrt{-1} p Z_{N}(Nt)} e^{\sqrt{-1} x K(Nt)} 1_{\{ I(Nt) = j \} } ] - \mathbb{E}_{(k,i)}[e^{\sqrt{-1} p Z_{N}(Nt - m_{N}t)} \frac{1}{2} \int_{\T} dk' e^{\sqrt{-1} x k'}] |^{2}
\end{align*}
and so we complete the proof.

\section*{Acknowledgement}

KS was supported by JSPS Grants-in-Aid for Scientific Research No. JP17K05587 and No. JP16H02211. MS was supported by JSPS Grants-in-Aid for Scientific Research No. JP16KT0021.

\appendix


\section{Derivation of \eqref{formalk}}\label{derivation}

We only consider the time evolution of $\widehat{v}_{1}(k,t)$. By the same calculation one can get the time evolution of $\widehat{v}_{2}(k,t)$. From $(\ref{formalz})$ we have
\begin{align*}
d \widehat{v}_{1}(k,t) &= \sum_{x \in \Z} e^{-2 \pi \sqrt{-1} k x} d v_{1}(x,t) \\
&=( - \widehat{\a}(k) \widehat{q^1}(k,t) + B\widehat{v^2}(k,t)  + \epsilon \gamma \widehat{\b}(k) \widehat{v^1}(k,t) ) dt\\
& ~ + \sqrt{\epsilon \gamma} \sum_{x \in \Z} \sum_{z;|z-x|=1} e^{-2 \pi \sqrt{-1} k x} (Y_{x,z} v_{1}(x,t)) dw_{x,z}. 
\end{align*}
Now we compute the last term. By summation by parts we have
\begin{align*}
&-\sum_{x \in \Z} \sum_{z;|z-x|=1} e^{-2 \pi \sqrt{-1} k x} (Y_{x,z} v_{1}(x,t)) dw_{x,z} \\
&= \sum_{x \in \Z} \sum_{z \in \Z} h(z) v_{2}(x+z) e^{-2 \pi \sqrt{-1} k (x+z)} dw_{x,x+1} ,
\end{align*}
where $h: \Z \to \Z$ is defined as
\begin{align*}
h(z)=
\begin{cases}
 e^{2 \pi \sqrt{-1} k} -1 , \ z=1 \\
e^{-2 \pi \sqrt{-1} k} -1 , \ z= 0\\
 0 , \ z \neq 0,1.
\end{cases}
\end{align*}
By the change of variables, the last term is rewritten as 
\begin{align*}
&\sum_{x \in \Z} \sum_{z \in \Z} h(z) v_{2}(x+z) e^{-2 \pi \sqrt{-1} k (x+z)} dw_{x,x+1} \\
&= \sum_{x \in \Z} \sum_{x' \in \Z} h(x'-x) v_{2}(x') e^{-2 \pi \sqrt{-1} k x'} dw_{x,x+1} \\
&= \sum_{x,x' \in \Z} (\int_{\T} dk' e^{2 \pi \sqrt{-1} k' (x'-x)} \sum_{y \in \Z} e^{-2 \pi \sqrt{-1} k' y} h(y)) v_{2}(x') e^{-2 \pi \sqrt{-1} k x'} dw_{x,x+1} ,
\end{align*}
and 
\begin{align*}
\sum_{y \in \Z} e^{-2 \pi \sqrt{-1} k' y} h(y) &= e^{-2 \pi \sqrt{-1} k} -1 + e^{- 2 \pi \sqrt{-1} k'}(e^{2 \pi \sqrt{-1} k} -1) \\
&= (e^{-2 \pi \sqrt{-1} k'} - e^{-2 \pi \sqrt{-1} k})(e^{2 \pi \sqrt{-1} k} - 1) \\
&= r(k,k').
\end{align*}
Therefore we have $(\ref{formalk})$.

\section{Existence and uniqueness of the solution of \eqref{defofpsi}}\label{existence}

We follow the strategy of \cite{DZ} to show the existence by classical 
fixed point theorem. 

First we prepare some notations. We introduce a norm on $(\mathbb{L}^{2}(\T))^{2}$ defined as
\begin{align*}
||\mathbf{f}||_{(\mathbb{L}^{2}(\T))^{2}}^{2} = ||f_{1}||_{\mathbb{L}^{2}(\T)}^{2} + ||f_{2}||_{\mathbb{L}^{2}(\T)}^{2}
\end{align*}
for $\mathbf{f} = (f_{1},f_{2}) \in (\mathbb{L}^{2}(\T))^{2}$. Let $(E,\mathcal{F},\mathbb{P})$ be a probability space and $W$ be a cylindrical Wiener process on $(E,\mathcal{F},\mathbb{P})$. Fix $T>0$. Denote by $(\mathcal{H},||\cdot||_{\mathcal{H}})$ the Banach space of $(\mathbb{L}^{2}(\T))^{2}$-valued measurable processes $\mathbf{f}(k,t) , ~  k \in \T, t \in [0,T]$ such that
\begin{align*}
||\mathbf{f}||_{\mathcal{H}} = (\sup_{t \in [0,T]} \mathbb{E}[||\mathbf{f}(\cdot,t)||_{(\mathbb{L}^{2}(\T))^{2}}^{2}])^{\frac{1}{2}} < \infty ,
\end{align*}
where two processes are identified if they are $\mathbb{P} \times dt$ almost surely equal.

Next we rewrite \eqref{defofpsi} as
\begin{align*}
d \begin{pmatrix} \widehat{\psi_{1}}(k,t) \\ \widehat{\psi_{2}}(k,t) \end{pmatrix} &= A_{1}(\widehat{\boldsymbol{\psi}}(\cdot,t))(k) dt + A_{2}(k')(\widehat{\boldsymbol{\psi}}(\cdot,t))(k) W(dk',dt) ,
\end{align*}
where $A_{1}$ and $A_{2}(k') , k' \in \T$ are bounded linear operators on $(\mathbb{L}^{2}(\T))^{2}$ defined as
\begin{align*}
A_{1}(\mathbf{f})(k)&=\begin{pmatrix} \{ - \sqrt{-1} {\omega}_1(k) + \epsilon \gamma \b(k) \theta_{1}(k)^2 \} f_{1}(k) + \epsilon \gamma \b(k) \theta_{1}(k) \theta_{2}(k) f_{2}^{*}(k) \\ \epsilon \gamma \b(k) \theta_{1}(k) \theta_{2}(k) f_{1}^{*}(k) + \{ - \sqrt{-1} {\omega}_2(k) + \epsilon \gamma \b(k) \theta_{2}(k)^2 \} f_{2}(k) \end{pmatrix}, \\
A_{2}(k')(\mathbf{f})(k) &= \begin{pmatrix} r(k,k') (\theta_{1}(k-k') f_{1}(k-k') + \theta_{2}(k-k') f_{2}^*(k'-k) ) \\ r(k,k') (\theta_{1}(k-k') f_{1}^{*}(k'-k) + \theta_{2}(k-k') f_{2}(k-k') ) \end{pmatrix}
\end{align*}
for $\mathbf{f} = (f_{1},f_{2}) \in (\mathbb{L}^{2}(\T))^{2}$. Fix $\widehat{\boldsymbol{\psi}}_{0} \in (\mathbb{L}^{2}(\T))^{2}$. We define a functional $I:\mathcal{H} \to \mathcal{H}$ as
\begin{align*}
I(\mathbf{f})_t= \widehat{\boldsymbol{\psi}}_{0} +  \int_{[0,t]} A_{1}(\mathbf{f}(\cdot,s)) ds + \int_{[0,t]} A_{2}(k')(\mathbf{f}(\cdot,s)) W(dk',ds). 
\end{align*}
For sufficiently small $T > 0$, $I$ is contractive and so there exists the unique fixed point $\widehat{\psi} \in \mathcal{H}$ such that $I(\widehat{\boldsymbol{\psi}}) = \widehat{\boldsymbol{\psi}}$. In this way we can construct a solution on the time interval $[0,T]$, and then we can construct a solution on the time interval $[T,2T]$ by the same argument and so on.

Finally we check the uniqueness of the solution in the sense of the distribution. Suppose that $\mathbf{f}^{(1)}$ and $\mathbf{f}^{(2)} \in \mathcal{H}$ are two solutions of \eqref{defofpsi} with a same initial condition. By the Cauchy-Schwarz inequality and the boundedness of $A_{1}$ and $A_{2}$, we have
\begin{align*}
&\mathbb{E}[||\mathbf{f}^{(1)}(\cdot,t) - \mathbf{f}^{(2)}(\cdot,t)||_{(\mathbb{L}^{2}(\T))^{2}}^{2}] \\
&\le C(T) \int_{[0,t]} ds ~ \mathbb{E}[||\mathbf{f}^{(1)}(\cdot,s) - \mathbf{f}^{(2)}(\cdot,s)||_{(\mathbb{L}^{2}(\T))^{2}}^{2}]
\end{align*}
for all $t \ge 0$. By the Gronwall's inequality we have $\mathbb{E}[||\mathbf{f}^{(1)}(\cdot,t) - \mathbf{f}^{(2)}(\cdot,t)||_{(\mathbb{L}^{2}(\T))^{2}}^{2}] = 0$. 

\section{Conservation of the total energy}\label{conservation}

By \eqref{defofpsi}, the time evolution of $\int_{\T} dk ~ \mathbb{E}_{\epsilon} [ |\widehat{\psi_{i}}(k,t)|^{2} ] $ is given by
\begin{align*}
&\frac{d}{dt} \int_{\T} dk ~ \mathbb{E}_{\epsilon} [ |\widehat{\psi_{i}}(k,t)|^{2} ] \\
&= \int_{\T} dk ~ 2 \gamma \widehat{\b}(k) \theta_{i}^{2}(k) \mathbb{E}_{\epsilon} [ |\widehat{\psi_{i}}(k,t)|^{2} ]  + 2 \gamma \widehat{\b}(k) \theta_{i}(k) \theta_{i^{*}}(k) \Re(\mathbb{E}_{\epsilon} [\widehat{\psi_{i}}(k,t)\widehat{\psi_{i^{*}}}(k,t) ] ) \\
& ~ + \gamma \theta_{i}^{2}(k) \int_{\T} dk' R(k,k') \{ \theta_{i}^{2}(k') \mathbb{E}_{\epsilon} [ |\widehat{\psi_{i}}(k',t)|^{2} ] + \theta_{i^{*}}^{2}(k') \mathbb{E}_{\epsilon} [ |\widehat{\psi_{i^{*}}}(k',t)|^{2} ] \notag \\
& ~ ~ + 2 \theta_{i}(k') \theta_{i^{*}}(k') \Re(\mathbb{E}_{\epsilon} [ \widehat{\psi_{i}}(k,t)\widehat{\psi_{i^{*}}}(k,t)) ] ) \}
\end{align*}
where $\Re a$ is the real part of $a \in \C$.
Since $ \sum_{i=1,2} \theta_{i}(k)^2 = 1$ and $\int_{\T} dk' ~ R(k,k') = -2 \widehat{\b}(k)$, we have
\begin{align*}
&\frac{d}{dt} \sum_{i=1,2} \int_{\T} dk ~ \mathbb{E}_{\epsilon} [ |\widehat{\psi_{i}}(k,t)|^{2} ] \\
&= \int_{\T} dk ~ 2 \gamma \widehat{\b}(k) \sum_{i=1,2} \theta_{i}^{2}(k) \mathbb{E}_{\epsilon} [ |\widehat{\psi_{i}}(k,t)|^{2} ] + 4 \gamma \widehat{\b}(k) \theta_{1}(k) \theta_{2}(k) \Re(\mathbb{E}_{\epsilon} [ \widehat{\psi_{1}}(k,t)\widehat{\psi_{2}}(k,t) ] ) \\
& ~ + \gamma \int_{\T} dk' R(k,k') \{ \theta_{1}^{2}(k') \mathbb{E}_{\epsilon} [ |\widehat{\psi_{1}}(k',t)|^{2} ] + \theta_{2}^{2}(k') \mathbb{E}_{\epsilon} [ |\widehat{\psi_{2}}(k',t)|^{2} ] \notag \\
& ~ ~ + 2 \theta_{1}(k') \theta_{2}(k') \Re(\mathbb{E}_{\epsilon} [ \widehat{\psi_{1}}(k,t)\widehat{\psi_{2}}(k,t) ] ) \} \\
& = 0.
\end{align*}

\section{Uniqueness of solutions of the linear kinetic equation}\label{positivity}

\begin{lemma}\label{Riesz}
Let $\{ \Omega^{\epsilon_N}(t) \}_{N \in \N}$ be a convergent subsequence with its limit $\Omega(t)$. Then there exists a vector-valued finite positive measure $\boldsymbol{\mu}(t)=(\mu_1(t),\mu_2(t))$ such that
\begin{align*}
\int_{\R \times \T} \mu_i(t)(dy,dk) J_i(y,k)^* = <\Omega(t),J_i>, \quad i=1,2
\end{align*}
for all $\boldsymbol{J} \in \mathbf{S}^2$. 
\end{lemma}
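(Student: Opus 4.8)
The plan is to represent each component of the limit by the Riesz--Markov--Kakutani theorem, the only nontrivial inputs being positivity and finiteness of the total mass. Write $\Omega_i(t)$ for the $i$-th component of $\Omega(t)$, so that $\langle\Omega_i(t),J\rangle=\lim_{N\to\infty}\langle\Omega_i^{\epsilon_N}(t),J\rangle$ for every $J\in\mathbf S$. First I would record that, by the Hermitian symmetry $\widehat{\Omega_i^\epsilon}_+(t)(p,k)^*=\widehat{\Omega_i^\epsilon}_+(t)(-p,k)$ built into \eqref{defofWigner}, the pairing $\langle\Omega_i^\epsilon(t),J\rangle$ is real whenever $J$ is real-valued; hence $\Lambda_i(J):=\langle\Omega_i(t),J\rangle$ is a real linear functional on the real subspace of $\mathbf S$. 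It then suffices to prove (i) $\Lambda_i(J)\ge0$ for every real $0\le J\in\mathbf S$, and (ii) $\sup\{\Lambda_i(J):J\in\mathbf S,\ 0\le J\le1\}<\infty$. Together these make $\Lambda_i$ a bounded positive functional, so Riesz--Markov--Kakutani produces a finite positive Radon measure $\mu_i(t)$ with $\int J\,d\mu_i(t)=\Lambda_i(J)$ for real $J$, and extending by antilinearity to complex $J$ (both sides of the asserted identity are antilinear because of the conjugate in \eqref{defofWigner}) yields the claimed representation.

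The heart of the matter is (i), since the Wigner functional is not pointwise nonnegative. I would compare it with a Husimi (coherent-state) regularization adapted to the lattice. Fixing an intermediate scale $\sqrt\epsilon$, set for the realization $\psi_i(\cdot,t/\epsilon)\in\ell^2(\Z)$
\begin{align*}
\phi^\epsilon_{y,k}(x)=c_\epsilon\,\exp\!\Big(-\frac{(\epsilon x-y)^2}{2\epsilon}\Big)\,e^{2\pi\sqrt{-1}kx},\qquad x\in\Z,
\end{align*}
with $c_\epsilon$ normalizing so that $\|\phi^\epsilon_{y,k}\|_{\ell^2(\Z)}=1$, and define the averaged Husimi function
\begin{align*}
\widetilde\Omega_i^\epsilon(t)(y,k)=\frac{\epsilon}{2}\,\mathbb E_\epsilon\Big[\,\big|\langle\psi_i(\cdot,\tfrac{t}{\epsilon}),\phi^\epsilon_{y,k}\rangle_{\ell^2(\Z)}\big|^2\,\Big]\ \ge\ 0,
\end{align*}
which is manifestly nonnegative and can be normalized so that its total integral over $\R\times\T$ equals $\frac{\epsilon}{2}\int_\T dk\,\mathbb E_\epsilon[|\widehat{\psi_i}(k,t/\epsilon)|^2]$, the total mass of $\Omega_i^\epsilon(t)$. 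A Taylor expansion of the window, whose widths in both $y$ and $k$ are $O(\sqrt\epsilon)$, gives for every $J\in\mathbf S$
\begin{align*}
\langle\Omega_i^\epsilon(t),J\rangle=\int_{\R\times\T}\widetilde\Omega_i^\epsilon(t)(y,k)\,J(y,k)\,dy\,dk+O_J(\sqrt\epsilon),
\end{align*}
the error controlled uniformly in $t$ by \eqref{ebound}. For real $0\le J$ the right-hand integral is nonnegative, so letting $\epsilon=\epsilon_N\to0$ gives $\Lambda_i(J)\ge0$, proving (i).

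For (ii) I would reuse the comparison together with the energy bound: since $\widetilde\Omega_i^\epsilon(t)\ge0$, for $0\le J\le1$ one has $\int\widetilde\Omega_i^\epsilon(t)\,J\le\int\widetilde\Omega_i^\epsilon(t)=\frac{\epsilon}{2}\int_\T dk\,\mathbb E_\epsilon[|\widehat{\psi_i}(k,t/\epsilon)|^2]$, which is bounded uniformly by a constant depending only on $K_0$ through \eqref{ebound}. Hence $\langle\Omega_i^\epsilon(t),J\rangle\le C(K_0)+O_J(\sqrt\epsilon)$ and, in the limit, $\Lambda_i(J)\le C(K_0)$, so the total mass is finite. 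The final assembly into $\boldsymbol\mu(t)=(\mu_1(t),\mu_2(t))$ is then the componentwise Riesz--Markov construction. The main obstacle is the positivity step (i): one must build coherent states matched to the lattice site $x\in\Z$ and the continuous wavenumber $k\in\T$, with widths vanishing yet spanning $\sim\epsilon^{-1/2}$ sites, and verify that the Wigner--Husimi difference vanishes in the limit uniformly in $t$, which is precisely where the rapid decay of $J\in\mathbf S$ in $y$ and the a priori bound \eqref{ebound} enter. Once (i) and (ii) are secured, the representation itself is routine.
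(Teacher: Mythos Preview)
Your approach is correct but takes a genuinely different route from the paper. The paper never introduces Husimi functions; instead it first proves \emph{multiplicative positivity}, namely $\langle\Omega_i(t),|J|^2\rangle\ge0$ for every $J\in\mathbf S$, by the elementary algebraic identity
\[
\langle\Omega_i^\epsilon(t),|J|^2\rangle=\frac{\epsilon}{2}\int_{\T}dk\ \mathbb E_\epsilon\Big[\Big|\sum_{x\in\Z}e^{-2\pi\sqrt{-1}xk}\psi_i(x,\tfrac{t}{\epsilon})J(\epsilon x,k)\Big|^2\Big]+O_J(\epsilon),
\]
obtained by replacing $J(\tfrac{\epsilon}{2}(x+x'),k)$ in the definition of $\Omega_i^\epsilon$ by $J(\epsilon x,k)$ and estimating the difference with integration by parts in $k$. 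Positivity on all nonnegative $J\in C_0^\infty$ then follows by approximating $J$ with $|a(y)\sqrt{J+1/m}|^2$ and passing to the limit. The extension to $C_0(\R\times\T)$ and the invocation of Riesz--Markov are the same as in your sketch.

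What each approach buys: the paper's argument is entirely elementary---no coherent states, no convolution estimate, only one Taylor/integration-by-parts bound and a square-root trick---and gives the sharper error $O_J(\epsilon)$. Your Husimi route is the standard semiclassical device and yields positivity for \emph{all} nonnegative $J$ in one stroke, at the cost of constructing lattice coherent states, checking a resolution of identity on $\ell^2(\Z)\times\T$, and proving the Wigner--Husimi comparison; these are doable but more involved than the paper's two-line identity. Both land on the same Riesz representation in the end.
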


\begin{proof}
Let $\Omega_{1}(t) \in \mathbf{S}'$ as $<\Omega_{1}(t), J>:= <\Omega(t), \boldsymbol{J}>$ for $\boldsymbol{J}=(J,0)$, $J \in \mathbf{S}$. 
First we show that $\Omega_1(\cdot)$ is multiplicatively positive, that is,
\begin{align*}
<\Omega_{1}(t),|J|^{2}> ~ \ge 0 
\end{align*}
for all $t \ge 0$ and $J \in \mathbf{S}$. Fix $t \ge 0$ and $J \in \mathbf{S}$. Since $J$ is smooth,
\begin{align*}
J(\frac{\epsilon}{2}(x+x'),k) - J(\epsilon x,k) &= \frac{\epsilon}{2} \int_{0}^{1} dr ~ (x' - x) \partial_{y} J(\epsilon x + r \frac{\epsilon}{2}(x' - x),k) 
\end{align*}
for all $x,x' \in \Z$ and so 
\begin{align*}
&\left| \int_{\T} dk e^{2\pi \sqrt{-1} (x'-x) k} \left(J(\frac{\epsilon}{2} (x+x'),k)J(\frac{\epsilon}{2} (x+x'),k)^* - J(\epsilon x,k) J(\frac{\epsilon}{2} (x+x'),k)^{*} \right) \right| \\
&= \left| \frac{\epsilon}{2}(x'-x) \int_{\T} dk e^{2\pi \sqrt{-1} (x'-x) k} J(\frac{\epsilon}{2} (x+x'),k)^{*} \int_{0}^{1} dr \partial_{y} J(\epsilon x + r \frac{\epsilon}{2} (x' - x) , k) \right|.
\end{align*}
By repeating the integration by parts we have
\begin{align*}
& \left|  \int_{\T} dk e^{2\pi \sqrt{-1} (x'-x) k} J(\frac{\epsilon}{2} (x+x'),k)^{*} \int_{0}^{1} dr \partial_{y} J(\epsilon x + r \frac{\epsilon}{2} (x' - x) , k) \right| \\
& = \left | \int_{\T} dk \left(\frac{1}{2 \pi \sqrt{-1} (x' - x)}\right)^{3} e^{2\pi \sqrt{-1} (x'-x) k}  \partial_{k}^{3}[ J(\frac{\epsilon}{2} (x+x'),k)^{*} \int_{0}^{1} dr \partial_{y} J(\epsilon x + r \frac{\epsilon}{2} (x' - x) , k) ] \right| \\
&\le \frac{1}{8\pi^3|x - x'|^{3}}  \int_{\T} dk  \ | \partial_{k}^{3}[ J(\frac{\epsilon}{2} (x+x'),k)^{*} \int_{0}^{1} dr \partial_{y} J(\epsilon x + r \frac{\epsilon}{2} (x' - x) , k) ] |. 
\end{align*}
Hence, we have
\begin{align*}
& \left| \int_{\T} dk e^{2\pi \sqrt{-1} (x'-x) k} \left(J(\frac{\epsilon}{2} (x+x'),k)J(\frac{\epsilon}{2} (x+x'),k)^* - J(\epsilon x,k) J(\frac{\epsilon}{2} (x+x'),k)^{*} \right) \right| \\
& \le \frac{\epsilon}{16\pi^3|x - x'|^{2}}  \int_{\T} dk  \ | \partial_{k}^{3}[ J(\frac{\epsilon}{2} (x+x'),k)^{*} \int_{0}^{1} dr \partial_{y} J(\epsilon x + r \frac{\epsilon}{2} (x' - x) , k) ] | \\
& \le \frac{1}{|x - x'|^{2}}O_{J}(\epsilon)
\end{align*}
for all $x \neq x' \in \Z$. In the same way, we can show that
\begin{align*}
\left| \int_{\T} dk e^{2\pi \sqrt{-1} (x'-x) k} \left(J(\epsilon x,k) J(\frac{\epsilon}{2} (x+x'),k)^{*} - J(\epsilon x,k) J(\epsilon x',k)^{*} \right) \right| \le \frac{1}{|x - x'|^{2}}O_{J}(\epsilon).
\end{align*}

On the other hand we have
\begin{align*}
&\frac{\epsilon}{2} \sum_{x,x' \in \Z} <\psi_{1}(x',\frac{t}{\epsilon})^{*} \psi_{1}(x,\frac{t}{\epsilon})>_{\epsilon} \int_{\T} dk ~ e^{2\pi \sqrt{-1} (x'-x) k} J(\epsilon x,k) J(\epsilon x',k)^{*} \\
&= \frac{\epsilon}{2} \int_{\T} dk ~ <|\sum_{x \in \Z} e^{- 2\pi \sqrt{-1} x k} \psi(x,\frac{t}{\epsilon}) J(\epsilon x,k)|^{2} >_{\epsilon} ~ \ge 0.
\end{align*}
Since
\begin{align*}
& \left | \int_{\T} dk e^{2\pi \sqrt{-1} (x'-x) k} |J(\frac{\epsilon}{2} (x+x'),k)|^{2} - J(\epsilon x,k) J(\epsilon x',k)^{*} \right| \\
&\le \left| \int_{\T} dk e^{2\pi \sqrt{-1} (x'-x) k} \left(J(\frac{\epsilon}{2} (x+x'),k)J(\frac{\epsilon}{2} (x+x'),k)^* - J(\epsilon x,k) J(\frac{\epsilon}{2} (x+x'),k)^{*} \right) \right| \\
& ~ + \left| \int_{\T} dk e^{2\pi \sqrt{-1} (x'-x) k} \left(J(\epsilon x,k) J(\frac{\epsilon}{2} (x+x'),k)^{*} - J(\epsilon x,k) J(\epsilon x',k)^{*} \right) \right|,
\end{align*}
combining the above calculations we have 
\begin{align*}
<\Omega_{1}^{\epsilon}(t),|J|^{2}> = \frac{\epsilon}{2} \int_{\T} dk ~ <|\sum_{x \in \Z} e^{- 2\pi \sqrt{-1} x k} \psi_{1}(x,\frac{t}{\epsilon}) J(\epsilon x,k)|^{2} >_{\epsilon} + O_{J}(\epsilon).
\end{align*}
Therefore $\Omega_{1}(\cdot)$ is multiplicatively positive. 

Next we show that $\Omega_{1}(\cdot)$ is positive, that is, 
\begin{align*}
<\Omega_{1}(t),J> ~ \ge 0
\end{align*}
for all $t \ge 0$ and  $J \in \mathbf{S} , J \ge 0$. Since $\{ J \in \mathbf{S} ; J \in C_{0}^{\infty}(\R \times \T) , J \ge 0 \}$ is a dense subset of $\{ J \in \mathbf{S} ; J \ge 0 \} $, it is sufficient to show the positivity on $C_{0}^{\infty}(\R \times \T)$. Fix $t \ge 0$ and a positive function $J \in C_{0}^{\infty}(\R \times \T)$. There exists a positive constant $M > 0$ such that the support of $J$ is a subset of $[-M,M] \times \T$. Let $a(y) \in C_{0}^{\infty}(\R)$ be a function such that $a(y) = 1 , ~ y \in [-M,M]$. Define $J^{(m)}(y,k) \in C_{0}^{\infty}(\R \times \T) , ~ m \in \N$ as
\begin{align*}
J^{(m)}(y,k) = a(y) \sqrt{J(y,k) + \frac{1}{m} }.
\end{align*}
Then the sequence $\{ |J^{(m)}|^{2} , ~ m \in \N \}$ converges to $J(y,k)$ in the topology of $C_{0}^{\infty}(\R \times \T)$. Since the embedding of the space $C_{0}^{\infty}(\R \times \T)$ into the space $\mathbf{S}$ is continuous, $\{ |J^{(m)}|^{2} , ~ m \in \N \}$ also converges to $J(y,k)$ in the topology of $\mathbf{S}$. By the continuity of $\Omega_{1}(t)$, we have
\begin{align*}
<\Omega_{1}(t),J> = \lim_{m \to \infty} <\Omega_{1}(t),|J^{(m)}|^{2}> \ \ge 0.
\end{align*}
Therefore $\Omega_{1}(\cdot)$ is positive. 

In the same way we can show that $\Omega_{2}(\cdot)$ is also positive.

By the usual method, for example see Lemma 1 in Chapter 2 of \cite{GV}, we can extend the domain of $\Omega_{i}(\cdot) , ~ i=1,2$ to the space $C_{0}(\R \times \T)$. By the Riesz representation theorem there exists a finite positive measure $\mu_{i}(\cdot) , ~ i=1,2$ such that
\begin{align*}
<\Omega_{i}(t),J> ~ = \int_{\R \times \T} \mu_{i}(t)(dy,dk) ~ J(y,k) , ~ i=1,2
\end{align*}
for all $t \ge 0$ and $J \in C_{0}(\R \times \T)$. By the linearity and the definition of $\Omega_{i}^{\epsilon} ( \cdot)$, 
\begin{align*}
<\Omega_{i}(t),J> ~ = \int_{\R \times \T} \mu_{i}(t)(dy,dk) ~ J(y,k)^* , ~ i=1,2
\end{align*}
for all $J \in \mathbf{S}$.
\end{proof}

\section{Uniqueness of the solution of the Boltzmann equation}\label{uniqueness}

Suppose that a vector-valued finite positive measure $\boldsymbol{\mu}(t)$ is a solution of the Boltzmann equation \eqref{boltzmann}. Then $\tilde{\boldsymbol{\mu}}(t)(dy,dk) := \boldsymbol{\mu}(t)(dy + \frac{1}{2\pi} \omega'(k)t,dk)$ is a solution of the following space-homogeneous Boltzmann equation
\begin{align*}
\partial_{t} \int d\tilde{\boldsymbol{\mu}}(t) \cdot \boldsymbol{J} = \int d\tilde{\boldsymbol{\mu}}(t) \cdot (C\boldsymbol{J})
\end{align*}
where 
\begin{align*}
\int d\tilde{\boldsymbol{\mu}}(t) \cdot \boldsymbol{J} &= \int \boldsymbol{\mu}(t)(dy + \frac{1}{2\pi} \omega'(k)t,dk) \cdot \boldsymbol{J} \\ 
&:= \int \boldsymbol{\mu}(t)(dy,dk) \cdot \boldsymbol{J}(y - \frac{1}{2\pi} \omega'(k)t,k). 
\end{align*}
Conversely, if $\tilde{\boldsymbol{\mu}}(t)$ is a solution of the space-homogeneous Boltzmann equation, then $\boldsymbol{\mu}(t)(dy,dk) := \tilde{\boldsymbol{\mu}}(t)(dy - \frac{1}{2\pi} \omega'(k)t,dk)$ is a solution of the Boltzmann equation \eqref{boltzmann}. 
Therefore, it is sufficient to show the uniqueness of the solution for the space-homogeneous Boltzmann equation. 

Suppose that $J^{1}(y,k) = f^{\lambda,y^{*},r}(y)G(k) , J^{2}(y,k) = 0$, where 
\begin{align*}
f^{\lambda,y^{*},r}(y) &=\exp \left( - \frac{\lambda}{r^{2} - |y -y^{*}|^{2}} \right) 1_{B(y^{*},r)}(y), \\
B(y^{*},r) &= \{ y \in \R \ ; \ |y - y^{*}| < r \},  
\end{align*}
$y^{*} \in \R$ , $r > 0$ and $G(\cdot) \in C^{\infty}(\T)$. Note that $f^{\lambda,y^{*},r} \in C^{\infty}_{0}(\R)$, $\|f^{\lambda,y^{*},r} \|_{\infty} \le 1$ and 
\begin{align*}
\lim_{\lambda \to 0} f^{\lambda,y^{*},r}(y) = 1_{B(y^{*},r)}(y).
\end{align*}
Let $\boldsymbol{\mu}(t), \boldsymbol{\nu}(t)$ be solutions of the space-homogeneous Boltzmann equation with a same initial condition. Then \begin{align*}
&|\int d\boldsymbol{\mu}(t) \cdot \boldsymbol{J} - \int d\boldsymbol{\nu}(t) \cdot \boldsymbol{J} | \\
& = |\int d\boldsymbol{\mu}(t) \cdot f^{\lambda,y^{*},r}(y) \boldsymbol{G} - \int d\boldsymbol{\nu}(t) \cdot f^{\lambda,y^{*},r}(y) \boldsymbol{G}  |\\
&\le  \int_{0}^{t} ds \left| \int d(\boldsymbol{\mu}(s) - \boldsymbol{\nu}(s)) \cdot (f^{\lambda,y^{*},r}(y) C\mathbf{G}(k)) \right|
\end{align*}
where $\mathbf{G}=(G(k),0)$.
By taking the limit $\lambda \to 0$, we have
\begin{align*}
&|\int_{\T} G(k) d(\mu_{1}(t)(B(y^{*},r),dk) - \nu_{1}(t)(B(y^{*},r),dk))| \\
&\le \int_{0}^{t} ds |\int_{\T} d(\mu(s)(B(y^{*},r),dk) - \nu(s)(B(y^{*},r),dk)) \cdot (C\mathbf{G})| \\
&\le \int_{0}^{t} ds \sum_{i=1,2} |\int_{\T} d(\mu_{i}(s)(B(y^{*},r),dk) - \nu_{i}(s)(B(y^{*},r),dk)) (C\mathbf{G})_{i}| \\
&\le 32 \sup_{k}|G(k)| \int_{0}^{t} ds \sum_{i=1,2} ||\mu_{i}(s)(B(y^{*},r),dk) - \nu_{i}(s)(B(y^{*},r),dk)|| \\
\end{align*}
where $||\cdot||$ is the total variation for a bounded signed measure on $\T$. Hence,
\begin{align*}
& ||\mu_{1}(t)(B(y^{*},r),dk) - \nu_{1}(t)(B(y^{*},r),dk)|| \\
& \quad \quad \le 32 \int_{0}^{t} ds \sum_{i=1,2} ||\mu_{i}(s)(B(y^{*},r),dk) - \nu_{i}(s)(B(y^{*},r),dk)||.
\end{align*}
By the same proof, we have
\begin{align*}
&||\mu_{2}(t)(B(y^{*},r),dk) - \nu_{2}(t)(B(y^{*},r),dk)|| \\
& \quad \le 32 \int_{0}^{t} ds \sum_{i=1,2} ||\mu_{i}(s)(B(y^{*},r),dk) - \nu_{i}(s)(B(y^{*},r),dk)||. \\
&\therefore \sum_{i=1,2} ||\mu_{i}(t)(B(y^{*},r),dk) - \nu_{i}(t)(B(y^{*},r),dk)|| \\
& \quad \quad \le 64  \int_{0}^{t} ds \sum_{i=1,2} ||\mu_{i}(s)(B(y^{*},r),dk) - \nu_{i}(s)(B(y^{*},r),dk)|| .  
\end{align*}
Therefore $\mu_{i}(t)(B(y^{*},r),dk) = \nu_{i}(t)(B(y^{*},r),dk)$ on $\T$ for any ball $B(y^{*},r) \subset \R$, which concludes $\boldsymbol{\mu}(t)=\boldsymbol{\nu}(t)$ for any $t \ge 0$.

\section{Derivation of $(\ref{evoofWigner})$}\label{evolution}

We only consider the time evolution of $\widehat{\Omega_{1}^{\epsilon}}_{+}(t)(p,k)$. By the same calculation we can obtain the time evolution of $\widehat{\Omega_{2}^{\epsilon}}_{+}(t)(p,k)$. From $(\ref{defofpsi})$ we have
\begin{align*}
&\partial_{t} \widehat{\Omega_{1}^{\epsilon}}_{+}(t)(p,k) \notag \\
&= - \frac{\sqrt{-1}}{\epsilon}\left({\omega}_1(k+\frac{\epsilon p}{2}) - {\omega}_1(k-\frac{\epsilon p}{2})\right) \widehat{\Omega_{1}^{\epsilon}}_{+}(t)(p,k) \notag \\
& ~ + \gamma \left(\b(k+\frac{\epsilon p}{2}) \theta_{1}(k+\frac{\epsilon p}{2})^2 + \b(k-\frac{\epsilon p}{2}) \theta_{1}(k-\frac{\epsilon p}{2})^2 \right) \widehat{\Omega_{1}^{\epsilon}}_{+}(t)(p,k) \notag \\
& ~ + \gamma \b(k+\frac{\epsilon p}{2}) \theta_{1}(k+\frac{\epsilon p}{2}) \theta_{2}(k+\frac{\epsilon p}{2}) \widehat{\Gamma_{2}^{\epsilon}}_{-}(t)(p,k) \notag \\
& ~ + \gamma \b(k-\frac{\epsilon p}{2}) \theta_{1}(k-\frac{\epsilon p}{2}) \theta_{2}(k-\frac{\epsilon p}{2})\widehat{\Gamma_{2}^{\epsilon}}_{+}(t)(p,k) \notag \\
& + \gamma \theta_{1}(k-\frac{\epsilon p}{2}) \theta_{1}(k+\frac{\epsilon p}{2}) \int_{\T} dk' r(k-\frac{\epsilon p}{2},k')^{*} r(k+\frac{\epsilon p}{2},k') \notag \\
& ~ ~ \times [\theta_{1}(k-k'-\frac{\epsilon p}{2}) \theta_{1}(k-k'+\frac{\epsilon p}{2}) \widehat{\Omega_{1}^{\epsilon}}_{+}(t)(p,k-k') \\
& ~ ~ ~ + \theta_{2}(k-k'-\frac{\epsilon p}{2}) \theta_{2}(k-k'+\frac{\epsilon p}{2}) \widehat{\Omega_{2}^{\epsilon}}_{+}(t)(p,k-k')) \notag \\
& ~ ~ ~ + \theta_{1}(k-k'-\frac{\epsilon p}{2}) \theta_{2}(k-k'+\frac{\epsilon p}{2}) \widehat{\Gamma_{2}^{\epsilon}}_{-}(t)(p,k-k') \\
& ~ ~ ~ + \theta_{1}(k-k'+\frac{\epsilon p}{2}) \theta_{2}(k-k'-\frac{\epsilon p}{2}) \widehat{\Gamma_{2}^{\epsilon}}_{+}(t)(p,k-k')]. \notag 
\end{align*}
By the change of variables $k-k' \to k'$, the last integral is rewritten as 
\begin{align*}
&\int_{\T} dk' r(k-\frac{\epsilon p}{2},k-k')^{*} r(k+\frac{\epsilon p}{2},k-k') \notag \\
& ~ ~ \times [\theta_{1}(k'-\frac{\epsilon p}{2}) \theta_{1}(k'+\frac{\epsilon p}{2}) \widehat{\Omega_{1}^{\epsilon}}_{+}(t)(p,k') + \theta_{2}(k'-\frac{\epsilon p}{2}) \theta_{2}(k'+\frac{\epsilon p}{2}) \widehat{\Omega_{2}^{\epsilon}}_{+}(t)(p,k')) \notag \\
& ~ ~ ~ + \theta_{1}(k'-\frac{\epsilon p}{2}) \theta_{2}(k'+\frac{\epsilon p}{2}) \widehat{\Gamma_{2}^{\epsilon}}_{-}(t)(p,k') + \theta_{1}(k'+\frac{\epsilon p}{2}) \theta_{2}(k'-\frac{\epsilon p}{2}) \widehat{\Gamma_{2}^{\epsilon}}_{+}(t)(p,k')]. \notag
\end{align*}
Hence, it is sufficient to show that $r(k-\frac{\epsilon p}{2},k-k')^{*} r(k+\frac{\epsilon p}{2},k-k') = R_{\epsilon p}(k,k')$. By the following direct calculations
\begin{align*}
&r(k-\frac{\epsilon p}{2},k-k')^{*} r(k+\frac{\epsilon p}{2},k-k') \\
&= (e^{2 \pi \sqrt{-1} (k-k')} - e^{2 \pi \sqrt{-1} (k-\frac{\epsilon p}{2})})(e^{- 2 \pi \sqrt{-1} (k-\frac{\epsilon p}{2})} - 1) \\
& ~ \times (e^{- 2 \pi \sqrt{-1} (k-k')} - e^{- 2 \pi \sqrt{-1} (k+\frac{\epsilon p}{2})})(e^{2 \pi \sqrt{-1} (k+\frac{\epsilon p}{2})} - 1) \\
&= (1 - e^{- \pi \sqrt{-1} \epsilon p}(e^{2 \pi \sqrt{-1} k'} + e^{-2 \pi \sqrt{-1} k'}) + e^{- 2 \pi \sqrt{-1} \epsilon p}) \\
& ~ \times (1 - e^{ \pi \sqrt{-1} \epsilon p}(e^{2 \pi \sqrt{-1} k} + e^{-2 \pi \sqrt{-1} k}) + e^{ 2 \pi \sqrt{-1} \epsilon p}) \\
&= (e^{ \pi \sqrt{-1} \epsilon p} - (e^{2 \pi \sqrt{-1} k'} + e^{-2 \pi \sqrt{-1} k'}) + e^{- \pi \sqrt{-1} \epsilon p}) \\
& ~ \times (e^{- \pi \sqrt{-1} \epsilon p} - (e^{2 \pi \sqrt{-1} k} + e^{-2 \pi \sqrt{-1} k}) + e^{\pi \sqrt{-1} \epsilon p}) 
\end{align*}
and
\begin{align*}
&e^{- \pi \sqrt{-1} \epsilon p} - (e^{2 \pi \sqrt{-1} k} + e^{-2 \pi \sqrt{-1} k}) + e^{\pi \sqrt{-1} \epsilon p} \\
&= 2\cos{\pi \epsilon p} - 2\cos{2 \pi k} \\
&= 4 \sin{(k+\frac{\epsilon p}{2})}\sin{(k-\frac{\epsilon p}{2})} ,
\end{align*}
we can verify the equation $r(k-\frac{\epsilon p}{2},k-k')^{*} r(k+\frac{\epsilon p}{2},k-k') = R_{\epsilon p}(k,k')$.

\end{document}